%
%
\documentclass[a4paper]{amsart}
\usepackage{amsmath, amsfonts, amsthm, amssymb, verbatim}
\usepackage{graphicx}
\usepackage[mathcal]{euscript}
\usepackage{amstext}
\usepackage{amssymb,mathrsfs}
\usepackage{color}

\parskip 2pt

\sloppy
\newcommand{\clg}[1]{{\mathcal{#1}}}

\newcommand{\rd}{\mathrm{d}}
\newcommand{\rn}{\mathbb{R}^{N}}
\newcommand{\rnn}{\mathbb{R}^{n}}
\newcommand{\rk}{\mathbb{R}^{k}}
\renewcommand{\H}{\mathcal{H}^{N-1}}
\newcommand{\HH}{\mathcal{H}}
\newcommand{\HN}{\mathcal{H}^{N}}
\renewcommand{\lq}{L^q(\partial\Omega)}
\renewcommand{\wp}{W^{1,p}(\Omega )}
\newcommand{\wpg}{W^{1,p}_\Gamma(\Omega)}

\renewcommand{\S}{\mathcal{S}}
\newcommand{\Se}{\mathbb{S}}
\newcommand{\X}{\mathcal{X}}
\newcommand{\R}{\mathbb{R}}

\newcommand{\spt}{\,{\rm spt}}

\newcommand{\dist}{\,{\rm dist}}

\newcommand{\dive}{\,{\rm div}}
\newcommand{\ve}{\varepsilon}

\newtheorem{te}{Theorem}[section]
\newtheorem{lem}[te]{Lemma}
\newtheorem{co}[te]{Corollary}

\theoremstyle{remark}
\newtheorem{ob}[te]{Remark}
\newtheorem{ex}[te]{Example}

\theoremstyle{definition}

\numberwithin{equation}{section}

\begin{document}

\title[Optimal boundary holes]{Optimal boundary holes for the Sobolev trace constant}

\author[L. Del Pezzo and J. Fernandez Bonder and W. Neves]
{Leandro Del Pezzo, Julian Fernandez Bonder, Wladimir Neves}

\address{Leandro M. Del Pezzo \hfill\break\indent
CONICET and Departamento  de Matem\'atica, FCEyN, Universidad de Buenos Aires,
\hfill\break\indent Pabell\'on I, Ciudad Universitaria (1428),
Buenos Aires, Argentina.}

\email{{\tt ldpezzo@dm.uba.ar}}

\address{Juli\'an Fern\'andez Bonder \hfill\break\indent
CONICET and Departamento  de Matem\'atica, FCEyN, Universidad de Buenos Aires,
\hfill\break\indent Pabell\'on I, Ciudad Universitaria (1428),
Buenos Aires, Argentina.}

\email{{\tt jfbonder@dm.uba.ar}\hfill\break\indent {\it Web page:}
{\tt http://mate.dm.uba.ar/$\sim$jfbonder}}

\address{Wladimir Neves \hfill\break\indent
Instituto de Matematica, Universidade Federal do Rio de Janeiro,
\hfill\break\indent  C.P. 68530, Cidade Universit\'aria 21945-970, Rio de Janeiro, Brazil.}

\email{{\tt wladimir@im.ufrj.br}}

\subjclass[2000]{35J66, 47R05}

\keywords{Steklov eigenvalues, p-laplace operator, shape optimization}

%
\begin{abstract}
In this paper we study the problem of minimizing the Sobolev trace Rayleigh quotient $\|u\|_{W^{1,p}(\Omega)}^p / \|u\|_{L^q(\partial\Omega)}^p$ among functions that vanish in a set contained on the boundary $\partial\Omega$ of given boundary measure.

We prove existence of extremals for this problem, and analyze some particular cases where information about the location of the optimal boundary set can be given. Moreover, we further study the shape derivative of the Sobolev trace constant under regular perturbations of the boundary set.
\end{abstract}
%
\maketitle

\section{Introduction} \label{INT}

Sobolev inequalities have proved to be a fundamental tool in order to study differential equations. Among Sobolev inequalities, one that have capture a great deal of attention in recent years is the Sobolev trace inequality that states
$$
S\left(\int_{\partial\Omega} |u|^q\, \rd\H\right)^{p/q} \le \int_{\Omega} |\nabla u|^p + |u|^p\, \rd x,
$$
for every $u\in W^{1,p}(\Omega)$ for some constant $S>0$, $1\le q\le p_*$, where  $p_*$ is the critical exponent in the Sobolev trace immersion, i.e. $p_* = p(N-1)/(N-p)$ if $1<p<N$ and $p_*=\infty$ if $p\ge N$ (the equality $q=p_*$ does not hold in the limit case $p=N$). Here $\mathcal H^s$ denotes, as usual, the $s-$dimensional Hausdorff measure, $\Omega\subset\rn$ is a smooth bounded domain (Lipschitz will be enough for most of our arguments).

In these inequalities, a fundamental role are played by the {\em optimal constants} and their associated {\em extremals}. That is, respectively, the largest possible constant $S$ in the above inequality defined as
$$
S = S_{p,q}(\Omega) := \inf_{u\in \X}\frac{\int_{\Omega} |\nabla u|^p + |u|^p\, \rd x}{\left(\int_{\partial\Omega} |u|^q\, \rd\H\right)^{p/q}}
$$
and extremals, which are functions $w\in \X $ where the above infimum is attained. Here $\X$ is the space of {\em admissible functions}, $\X:=W^{1,p}(\Omega)\setminus W^{1,p}_0(\Omega)$.

It is a well known fact that if $1<p<N$ and $1\le q\le p_*$ or $p\ge N$ and $1\le q<\infty$ then the constant $S$ is positive. For the existence of extremals, the only case which
is nontrivial is the {\em critical} one, $1<p<N$ and $q=p_*$ where
the immersion $W^{1,p}(\Omega)\subset L^{p_*}(\partial\Omega)$ is
no longer compact. (see, for instance \cite{FBR1, FBR2}).

The critical case (i.e. $1<p<N$ and $q=p_*$) was analyzed in \cite{FBR4} and \cite{FBS}. In those papers the authors show that, under very mild assumptions on the domain $\Omega$ (e.g. the existence of a boundary point of positive mean curvature) there exist extremals for $S$.

\medskip

Motivated by some problems in shape optimization for stored energies under prescribed loadings, in \cite{FBRW2} the authors study a variant of the trace inequality (see \cite{FBRW2} for further discussion on the problem): Given a set $A\subset \Omega$, minimize the {\em Rayleigh quotient} over the class of functions that vanishes on $A$, i.e.
$$
S(A):= \inf_{u\in \X_A}\frac{\int_{\Omega} |\nabla u|^p + |u|^p\, \rd x}{\left(\int_{\partial\Omega} |u|^q\, \rd\H\right)^{p/q}}
$$
where
$$
\X_A:=\{u\in \X\colon u=0 \ \mbox{a.e. on\; } A\}.
$$
In the above mentioned paper \cite{FBRW2}, existence of extremals for $S(A)$ is proved in the subcritical case $q<p_*$ (see \cite{FBS} for the critical case) and moreover the following shape optimization problem is studied: Minimize $S(A)$ among measurable sets $A\subset \Omega$ such that $\HN(A) = \alpha \HN(\Omega)$ for some fixed $0<\alpha<1$. A set $A^*$ that minimizes $S(A)$ is called an optimal set.

In \cite{FBRW2} the existence of optimal sets is established and some geometric properties of optimal sets are analyzed. Moreover, in the case $p=2$ the interior regularity of optimal sets is studied in \cite{FBRW1}. See \cite{FBRS}, where some asymptotic behavior of optimal sets are studied (see also, Section 4). Further, in \cite{FBGR} and in \cite{DP} the so-called {\em shape derivative} for $S(A)$ is computed with respect to regular deformations on the set $A$.

\medskip

One observes that, in all the above mentioned works, the sets where the test functions are forced to vanish are {\em interior} sets, i.e. $A\subset\Omega$ of positive Lebesgue measure. However, the important case of boundary sets, i.e. $\Gamma \subset \partial\Omega$ was not treated previously. Hence, the main objective of this work is to fill this gap.

\medskip

So, in this paper we study the best Sobolev trace constant from $W^{1,p}(\Omega)$ into $L^q(\partial\Omega)$ for functions that vanish on a subset $\Gamma$ of $\partial\Omega$, i.e.
\begin{equation}\label{riquelme}
S(\Gamma):= \inf_{u\in \X_\Gamma}\frac{\int_{\Omega} |\nabla u|^p + |u|^p\, \rd x}{\left(\int_{\partial\Omega} |u|^q\, \rd\H\right)^{p/q}}
\end{equation}
where
$$
\X_\Gamma:=\{u\in \X\colon u=0 \ \H-\mbox{a.e. } \Gamma\}.
$$

\medskip
Here, we consider exponents $1\le q<p_*,$ so that the immersion
$W^{1,p}(\Omega)\subset L^q(\partial\Omega)$ turns out to be
compact. Therefore, the existence of extremals for $S(\Gamma)$
follows by direct minimization.

The critical case, could be treated by the same method employed in \cite{FBS}. However, we will not do it in this article.

\bigskip

Next, we study the following optimization problem: Given $0<\alpha<1,$ we look for the value
\begin{equation}\label{aguero}
    \S(\alpha):=
    \inf \left\{
    S(\Gamma)\colon\Gamma\subset\partial\Omega,
    \H(\Gamma)=\alpha \, \H(\partial\Omega)
    \right\}.
\end{equation}

\medskip

A set $\Gamma^* \subset \partial \Omega$ is called an {\em optimal
boundary hole}, when it realizes the above infimum, i.e.
$S(\Gamma^*) = \S(\alpha)$ and $\H(\Gamma^*) = \alpha
\H(\partial\Omega)$.

One of the main issues of this paper is to show the existence and geometric properties of optimal boundary holes.

\subsection*{Organization of the paper} The rest of the paper is organized as follows. After a short section 2 were we collect some preliminary remarks, in section 3 we establish the existence of optimal boundary holes. In section 4, we analyze the simpler case where the domain $\Omega$ is a euclidean ball given a complete characterization of optimal boundary holes for this simpler geometry. In order to have a better understanding of more complex geometries, in section 5 we use a dimension reduction technique to deal with domains that are stretched in some directions. Finally, in section 6, we compute the so--called {\em shape derivative} of $S(\Gamma)$ for regular deformations of a fixed boundary hole $\Gamma$.

\section{Preliminary remarks}

In this very short section, we give some preliminary observations that will be helpful in the remaining of the paper.

First, observe that if $u$ is an extremal for $S(\Gamma)$ then $u$ turns out to be a week solution to the following Euler--Lagrange equation
\begin{equation}\label{batista}
    \begin{cases}
        -\Delta_p u + |u|^{p-2}u = 0 & \textrm{in } \Omega,
        \\[5pt]
        |\nabla u|^{p-2}\frac{\partial u}{\partial \nu}= \lambda|u|^{q-2}u & \textrm{on } \partial\Omega\setminus
        \Gamma,\\[5pt]
        u=0 & \text{on } \Gamma,
\end{cases}
\end{equation}
where $\Delta_{p} u= \textrm{div}(|\nabla u|^{p-2} \, \nabla u)$
is the usual $p-$Laplacian, $\frac{\partial}{\partial \nu}$ is the
outer unit normal derivative and $\lambda$ is a
positive constant that depends on the normalization of $u$. This is
$u\in\X_\Gamma$ and
$$
\int_{\Omega}|\nabla u|^{p-2}\nabla u\nabla\phi + |u|^{p-2}u\phi\, \rd x =\lambda \int_{\partial\Omega} |u|^{q-2}u\phi\, \rd\H,
$$
for every $\phi\in\X_\Gamma$. Observe that, if $\|u\|_{\lq}=1,$ then $\lambda=S(\Gamma).$

\medskip

As a consequence of \eqref{batista}, we have the following remarks.

\begin{ob}\label{regularidad.interior}
By the regularity results of \cite{Li2},  an extremal $u$ of $S(\Gamma)$,  verify that $u\in C^{1,\delta}_{loc}(\Omega)$ for some $0<\delta<1$.

Moreover, by \cite{L}, if $\partial\Omega\setminus \overline{\Gamma} \in C^{1,\eta}$, then the regularity up to the boundary is $u\in C^{1,\gamma}_{loc}(\overline{\Omega}\setminus\overline{\Gamma})$ for some $0<\gamma<1.$
\end{ob}

\begin{ob}
\label{RCS} If  $u $ is an extremal of
$S(\Gamma),$ then we have that $|u|$ is also an extremal of
$S(\Gamma)$. Thus, using that $|u|$ is a week solution of
\eqref{batista} and the maximum principle (see \cite{V}), we
have that $u$ has constant sign. Therefore, we can always assume that
\[
u > 0 \mbox{ in } \Omega \mbox{ and } u\ge0 \mbox{ on } \partial
\Omega.
\]
Moreover, by Hopf's Lemma (see \cite{V}) and the boundary regularity we obtain that nonnegative solutions $u$ to \eqref{batista} verify
$$
u>0 \quad \mbox{in } \overline{\Omega}\setminus\overline{\Gamma}.
$$
\end{ob}

\medskip

Finally, we need the following lemma on pointwise convergence for
Sobolev functions. We believe that this result is well-known but
we were unable to find it in the literature.

\begin{lem}\label{conv.capacidad}
Let $\{f_n\}_{n\in\mathbb{N}}\subset W^{1,p}(\Omega)$ with $1<p<N$ be such that $f_n\to 0$ as $n\to\infty$ in $W^{1,p}(\Omega)$. Then, there exists a subsequence $\{f_{n_j}\}_{j\in\mathbb{N}}\subset \{f_n\}_{n\in\mathbb{N}}$  and a set $B\subset\overline{\Omega}$ such that $cap_p(B)=0$ and
$$
f_{n_j}(x)\to 0,\quad \mbox{as } j\to\infty\qquad \mbox{for } x\in\overline{\Omega}\setminus B.
$$
\end{lem}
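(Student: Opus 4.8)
The plan is to combine the standard fact that $W^{1,p}$ convergence implies convergence in $p$-capacity with the Borel–Cantelli trick used to pass from capacitary convergence to pointwise convergence quasi-everywhere. First I would extend the functions to all of $\mathbb{R}^N$: since $\Omega$ is Lipschitz there is a bounded linear extension operator $E\colon W^{1,p}(\Omega)\to W^{1,p}(\mathbb{R}^N)$, so $\tilde f_n := E f_n \to 0$ in $W^{1,p}(\mathbb{R}^N)$ and it suffices to prove the statement for $\tilde f_n$ on a fixed ball $B_R\supset\overline\Omega$, with $B$ defined there; restricting $B$ to $\overline\Omega$ only helps. This reduction lets me work with the usual $p$-capacity of compact/analytic subsets of $\mathbb{R}^N$ and with the fact that every $W^{1,p}(\mathbb{R}^N)$ function has a quasi-continuous representative.

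The key step is a Chebyshev-type estimate for capacity. Passing to a subsequence (still denoted $f_n$) I may assume $\|\tilde f_n\|_{W^{1,p}(\mathbb{R}^N)} \le 2^{-n}$. Set
\[
A_n := \{ x\in\mathbb{R}^N : |\tilde f_n(x)| > 2^{-n/2}\},
\]
interpreting $\tilde f_n$ through its quasi-continuous representative. Then $2^{n/2}\tilde f_n \in W^{1,p}(\mathbb{R}^N)$ is (a quasi-continuous function) $\ge 1$ on $A_n$, so by the definition of $p$-capacity,
\[
cap_p(A_n) \le \|2^{n/2}\tilde f_n\|_{W^{1,p}(\mathbb{R}^N)}^p \le 2^{np/2}\cdot 2^{-np} = 2^{-np/2}.
\]
Hence $\sum_n cap_p(A_n) < \infty$. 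Now put $B := \bigcap_{m} \bigcup_{n\ge m} A_n$. By countable subadditivity of $p$-capacity, $cap_p\big(\bigcup_{n\ge m}A_n\big)\le \sum_{n\ge m} 2^{-np/2}\to 0$ as $m\to\infty$, and since $B\subset \bigcup_{n\ge m}A_n$ for every $m$ we get $cap_p(B)=0$. If $x\notin B$ then $x\notin A_n$ for all $n$ large, i.e. $|\tilde f_n(x)|\le 2^{-n/2}$ eventually, so $\tilde f_n(x)\to 0$; taking $B\cap\overline\Omega$ and the restricted subsequence $\{f_{n_j}\}$ finishes the argument.

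The main obstacle is the bookkeeping about representatives: the estimate above is only meaningful if each $\tilde f_n$ is replaced by its quasi-continuous representative, and one must know that $W^{1,p}$ functions admit such representatives and that the capacitary Chebyshev inequality $cap_p(\{|v|>t\})\le t^{-p}\|v\|_{W^{1,p}}^p$ holds for them — this is the one place where $1<p<N$ (so that $W^{1,p}$ is genuinely tied to a nontrivial $p$-capacity and functions are not automatically continuous) is used. These facts are standard (see e.g. the monographs on fine properties of Sobolev functions and capacity), and once they are granted the Borel–Cantelli argument is routine. A minor point to be careful about is that the pointwise statement in the conclusion is for the specific representatives of $f_n$ inherited from the quasi-continuous representatives of the extensions; since in the applications in this paper the relevant functions $f_n$ are already (locally) continuous on $\overline\Omega\setminus\overline\Gamma$ by Remark~\ref{regularidad.interior}, this causes no difficulty.
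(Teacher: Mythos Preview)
Your proof is correct and follows the same overall strategy as the paper: a capacitary Chebyshev inequality followed by a Borel--Cantelli argument to upgrade convergence in capacity to pointwise convergence quasi-everywhere. The only technical difference is in how the Chebyshev step is implemented. The paper invokes Lemma~1 and Theorem~1 of Section~4.8 in Evans--Gariepy, working with the Hardy--Littlewood maximal function and the estimate $cap_p(\{Mf>\alpha\})\le C\alpha^{-p}\|f\|_{W^{1,p}}^p$, then using that the precise representative satisfies $|f(x)|\le Mf(x)$; you instead pass directly through quasi-continuous representatives and the inequality $cap_p(\{|v|>t\})\le t^{-p}\|v\|_{W^{1,p}}^p$. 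Both routes are standard and lead to the same summable capacity bounds and the same limsup set $B$. Your extension-to-$\mathbb{R}^N$ step is a reasonable way to access the capacity theory cleanly; the paper works on $\overline\Omega$ throughout but is implicitly relying on the same body of results. One small cosmetic point: your test function for $cap_p(A_n)$ should be $2^{n/2}|\tilde f_n|$ rather than $2^{n/2}\tilde f_n$, but since $\|\,|\tilde f_n|\,\|_{W^{1,p}}=\|\tilde f_n\|_{W^{1,p}}$ this changes nothing.
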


\begin{proof}
The lemma is a consequence of Lemma 1 and Theorem 1 in Section 4.8
of \cite{EG}. In fact, by Lemma 1 in Section 4.8 of \cite{EG}, we
have, for $\alpha>0$, the Tchebyshev--type inequality
$$
cap_p(Mf > \alpha) \le \frac{C}{\alpha^p}\|f\|_{W^{1,p}(\Omega)}^p,
$$
where $C$ is a positive constant that depends only on
$N$, $p$ and $Mf$ is the Hardy-Littlewood maximal function. So, if $f_n\to 0$ in $W^{1,p}(\Omega)$, there
exists a subsequence, $\{f_{n_j}\}_{j\in\mathbb{N}}$ such that
$$
cap_p(Mf_{n_j}>1/j) < \frac{C}{2^j}.
$$
Let us define $A_j:=\{Mf_{n_j}>1/j\}$ and let $B_m := \cup_{j=m}^\infty A_j$. Therefore,
$$
cap_p(B_m)\le \sum_{j=m}^\infty cap_p(A_j) < C \sum_{j=m}^\infty
\frac{1}{2^j}.
$$

Now, if $x\in \Omega\setminus B_m$, $Mf_{n_j}(x) < 1/j$ and by Theorem 1, section 4.8 of \cite{EG}, it follows that $|f_{n_j}(x)| < 1/j$, so $f_{n_j}\to 0$ as $j\to\infty$ in $\Omega\setminus B_m$ for all $m\in\mathbb{N}$.

Since $cap_p(B_m)\to 0$ as $m\to\infty$ the result follows.
\end{proof}

\section{The existence an optimal boundary hole} \label{OHS}

In this section, following ideas from \cite{FBRW2}, we first
prove that $S(\Gamma)$ is lower semi-continuous with respect to the
hole (Theorem \ref{sls}). Then, we prove the existence of an optimal
boundary hole.

\begin{te}\label{sls}Let $\{\Gamma_\ve
\}_{\ve>0}$ be a family of positive $\H-$measurable subsets of
$\partial\Omega$ and $\Gamma_0\subset\partial\Omega$ be a positive
$\H-$ measurable set, such that
$$
    \chi_{\Gamma_\ve}\stackrel{*}{\rightharpoonup}
    \chi_{\Gamma_0} \quad *-\textrm{weakly in }
    L^\infty(\partial\Omega),
    $$
    where $\chi_A$ is the characteristic function of
    the set $A$. Then,
    $$
    S(\Gamma_0)\le\liminf_{\ve \to 0^+}S(\Gamma_\ve).
    $$
\end{te}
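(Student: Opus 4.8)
The plan is to argue by a standard lower semicontinuity / $\Gamma$-liminf argument. Fix a subsequence (not relabeled) along which $S(\Gamma_\ve)$ converges to $\liminf_{\ve\to 0^+} S(\Gamma_\ve)=:L$; we may assume $L<\infty$, otherwise there is nothing to prove. For each $\ve$ let $u_\ve\in\X_{\Gamma_\ve}$ be an extremal for $S(\Gamma_\ve)$, normalized by $\|u_\ve\|_{L^q(\partial\Omega)}=1$, so that $\|\nabla u_\ve\|_{L^p(\Omega)}^p+\|u_\ve\|_{L^p(\Omega)}^p = S(\Gamma_\ve)$. Since $S(\Gamma_\ve)\to L<\infty$, the sequence $\{u_\ve\}$ is bounded in $W^{1,p}(\Omega)$, so up to a further subsequence $u_\ve\rightharpoonup u_0$ weakly in $W^{1,p}(\Omega)$, and by compactness of the trace embedding $W^{1,p}(\Omega)\hookrightarrow L^q(\partial\Omega)$ (recall $q<p_*$) we get $u_\ve\to u_0$ strongly in $L^q(\partial\Omega)$, hence $\|u_0\|_{L^q(\partial\Omega)}=1$; in particular $u_0\in\X$. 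By weak lower semicontinuity of the $W^{1,p}$-norm,
$$
\int_\Omega |\nabla u_0|^p+|u_0|^p\,\rd x \le \liminf_{\ve\to 0^+}\int_\Omega |\nabla u_\ve|^p+|u_\ve|^p\,\rd x = L.
$$
Thus, once we show $u_0\in\X_{\Gamma_0}$, i.e. that $u_0=0$ $\H$-a.e.\ on $\Gamma_0$, we immediately conclude $S(\Gamma_0)\le \int_\Omega |\nabla u_0|^p+|u_0|^p\,\rd x \le L$, which is the claim.

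The heart of the matter — and the main obstacle — is precisely to pass the constraint $u_\ve=0$ $\H$-a.e.\ on $\Gamma_\ve$ to the limit $u_0=0$ $\H$-a.e.\ on $\Gamma_0$, since weak $W^{1,p}$-convergence does not control pointwise boundary values, and the sets $\Gamma_\ve$ move. Here I would use the two ingredients provided in the excerpt. First, $\chi_{\Gamma_\ve}\stackrel{*}{\rightharpoonup}\chi_{\Gamma_0}$ in $L^\infty(\partial\Omega)$ means that for every $g\in L^1(\partial\Omega)$ one has $\int_{\Gamma_\ve} g\,\rd\H\to\int_{\Gamma_0} g\,\rd\H$. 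Second, I would upgrade the weak convergence of the traces to a.e.\ convergence: by the compact trace embedding the traces converge strongly in $L^q(\partial\Omega)$, hence (after yet another subsequence) $u_\ve\to u_0$ $\H$-a.e.\ on $\partial\Omega$; moreover, with $1<p<N$ and the trace theory, one can even invoke Lemma~\ref{conv.capacidad} applied to $f_\ve := u_\ve - u_0$ — noting $u_\ve - u_0 \to 0$ in $W^{1,p}(\Omega)$ is not available (only weak), so instead I rely on the strong $L^q(\partial\Omega)$ convergence directly for the a.e.\ statement on $\partial\Omega$. With $u_\ve\to u_0$ $\H$-a.e.\ on $\partial\Omega$, set $v_\ve:=|u_0|\wedge 1$ if needed (or simply work with the bounded truncations $T_k(u_0)$, $T_k(u_\ve)$); then $\chi_{\Gamma_\ve}|T_k(u_\ve)|\to \chi_{\Gamma_0}|T_k(u_0)|$ $\H$-a.e.\ on $\partial\Omega$ and these are dominated by the constant $k\in L^1(\partial\Omega)$, so by dominated convergence
$$
0=\int_{\partial\Omega}\chi_{\Gamma_\ve}|T_k(u_\ve)|\,\rd\H \longrightarrow \int_{\partial\Omega}\chi_{\Gamma_0}|T_k(u_0)|\,\rd\H,
$$
whence $\int_{\Gamma_0}|T_k(u_0)|\,\rd\H=0$ for every $k$, i.e.\ $u_0=0$ $\H$-a.e.\ on $\Gamma_0$. (If one prefers to avoid truncations, replace $|T_k(u_\ve)|$ by $\psi(u_\ve)$ for a bounded continuous $\psi$ with $\psi(0)=0$, $\psi>0$ off $0$, and argue identically.)

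Two technical points deserve a remark and I would address them explicitly. The first is the justification of the $\H$-a.e.\ convergence of traces on $\partial\Omega$: strong $L^q(\partial\Omega)$ convergence gives a subsequence converging $\H$-a.e., which is all that is needed, and it is consistent with the earlier-stated capacitary lemma since sets of $p$-capacity zero are $\H^{N-1}$-null for $p>N-1$, though we do not actually need that sharper statement here. The second is that all the subsequence extractions are harmless: we started by passing to a subsequence realizing the $\liminf$, and every further extraction is along that subsequence, so the final inequality $S(\Gamma_0)\le L=\liminf_{\ve\to 0^+} S(\Gamma_\ve)$ stands. Finally, one should check $u_0\not\equiv 0$ so that $u_0$ is genuinely admissible, but this is automatic from $\|u_0\|_{L^q(\partial\Omega)}=1$.
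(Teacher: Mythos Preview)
Your overall strategy coincides with the paper's: pass to a subsequence realizing the $\liminf$, take normalized extremals $u_\ve$, extract a weak $W^{1,p}$ limit $u_0$ with strong $L^q(\partial\Omega)$ trace convergence, and then verify that $u_0\in\X_{\Gamma_0}$. The difference---and the place where your argument has a genuine gap---is in how you pass the constraint to the limit.

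You assert that $\chi_{\Gamma_\ve}|T_k(u_\ve)|\to \chi_{\Gamma_0}|T_k(u_0)|$ $\H$-a.e.\ on $\partial\Omega$ and then invoke dominated convergence. But this pointwise convergence is not justified: weak-$*$ convergence $\chi_{\Gamma_\ve}\stackrel{*}{\rightharpoonup}\chi_{\Gamma_0}$ in $L^\infty(\partial\Omega)$ does \emph{not} by itself give $\chi_{\Gamma_\ve}\to\chi_{\Gamma_0}$ $\H$-a.e., so you cannot conclude a.e.\ convergence of the product. (It is true that weak-$*$ convergence of characteristic functions to a characteristic function upgrades to strong $L^1$ convergence, whence a.e.\ along a further subsequence---but you do not invoke this, and without it the DCT step is unfounded.) Note also that $\chi_{\Gamma_\ve}|T_k(u_\ve)|\equiv 0$ for every $\ve$, so the a.e.\ limit of this sequence is $0$; asserting that this limit equals $\chi_{\Gamma_0}|T_k(u_0)|$ is precisely the conclusion you are trying to reach.

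The clean fix is to replace the ``a.e.\ $+$ DCT'' step by a weak--strong pairing: $|T_k(u_\ve)|\to|T_k(u_0)|$ strongly in $L^1(\partial\Omega)$ (from strong $L^q$ convergence and Lipschitz continuity of $T_k$), and $\chi_{\Gamma_\ve}\stackrel{*}{\rightharpoonup}\chi_{\Gamma_0}$ in $L^\infty(\partial\Omega)$, hence
\[
0=\int_{\partial\Omega}\chi_{\Gamma_\ve}|T_k(u_\ve)|\,\rd\H\longrightarrow\int_{\partial\Omega}\chi_{\Gamma_0}|T_k(u_0)|\,\rd\H.
\]
The paper does exactly this, but more simply: it takes the extremals $u_n$ to be \emph{nonnegative} (Remark~\ref{RCS}), so no truncation is needed and one pairs $u_n\to u$ in $L^1(\partial\Omega)$ directly against $\chi_{\Gamma_n}$ to obtain $\int_{\Gamma_0}u\,\rd\H=0$, whence $u=0$ $\H$-a.e.\ on $\Gamma_0$ by nonnegativity. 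Adopting nonnegative extremals from the outset would have spared you the truncation detour entirely.
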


\begin{proof}
    Let $\{\Gamma_n\}_{n\in\mathbb{N}}$ be a subsequence of
    $\{\Gamma_\ve\}_{\ve>0}$ such that
    $$
    \mathcal{L}
    =\liminf_{\ve\to0}S(\Gamma_\ve)=\lim_{n\to\infty}
    S(\Gamma_n).
    $$
    For each $n\in\mathbb{N}$,
    we consider $u_n \in \X_{\Gamma_n}$ to be an
    extremal of $S(\Gamma_n)$, such that
    \[
    u_n \ge 0 \quad \text{and} \quad
    \|u_n\|_{L^q(\partial\Omega)}=1.
    \]
    Therefore, the sequence
    $\{u_n\}_{n\in\mathbb{N}}$ is bounded in $W^{1,p}(\Omega)$ and
       hence there exists a
       function $u \in W^{1,p}(\Omega)$,
       such that, for a subsequence still denoted by
       $\{u_n\}_{n\in\mathbb{N}}$,
       \begin{eqnarray}
        u_n &\rightharpoonup& u, \quad \textrm{ weakly in }
        W^{1,p}(\Omega),\\
    \label{con1} u_n &\to& u, \quad
    \textrm{ strongly in } L^p(\Omega),\\
        \label{con2} u_n &\to& u, \quad \textrm{ strongly in }
        L^q(\partial\Omega).
        \end{eqnarray}
    In particular, we have that $u\ge0,$
    $\|u\|_{L^q(\partial\Omega)}=1$ and
    $$
    \|u\|_{W^{1,p}(\Omega)}\le\liminf_{n\to\infty}
    \|u_n\|_{W^{1,p}(\Omega)}.
    $$
    Moreover, for each $n \in \mathbb{N}$, $u_n= 0$ $\H-$a.e. on $\Gamma_n$.
    Thus, as
    $$
    \chi_{\Gamma_n}\stackrel{*}{\rightharpoonup}\chi_{\Gamma_0}\quad *-\mbox{weakly
    in }L^\infty(\partial\Omega)
    $$and by
    \eqref{con2}, we have
    $$
      0= \lim_{n\to\infty}\int_{\Gamma_n}u_n \, \rd \H= \int_{\Gamma_0}u \, \rd\H.
    $$
    Therefore, since $u\ge0,$ we have that $u = 0$ $\H-$a.e. on
    $\Gamma_0.$ Thus $u$ is an admissible function in the
    characterization of $S(\Gamma_0)$ and
    $$
    S(\Gamma_0)\le\|u\|_{W^{1,p}(\Omega)}^p\le\liminf_{n\to\infty}
    \|u_n\|_{\wp}^p=\mathcal{L}.
    $$
    This finishes the proof.
\end{proof}

\begin{ob} There isn't any monotonicity assumption on the family $\{\Gamma_\ve \}_{\epsilon>0}$.
\end{ob}

The continuity of $S(\Gamma)$ with respect to the topology of Theorem \ref{sls} does not hold, as is shown in the following example.

\begin{ex} We take $1<p\le N$. The case for $p> N$ is easier by the compact embedding of $W^{1,p}(\Omega)$ into continuous functions.

Let $\Omega$ be a bounded domain in $\mathbb{R}^n$ that satisfies
    the interior ball condition for all $x\in\partial\Omega.$
    Let $x_0\in\partial\Omega$
    and let $E\subset \partial\Omega$ be set of zero $\H-$measure such that $cap_p(E)>0$ and
    there exists $r>0$ such that $B(x_0,r)\cap
    E=\emptyset.$  Then, we take $\Gamma = B_{\frac{r}{2}}(x_0)
    \cap\partial\Omega$
    and $\Gamma_n= \Gamma\cup E_n$ where $E_n=\cup_{x\in
    E}B(x,\frac{1}{n})\cap\partial \Omega$ for all $n\in\mathbb{N}.$
    Observe that
    \[
    \chi_{\Gamma_{\frac{1}{n}}}\stackrel{*}{\rightharpoonup}
         \chi_{\Gamma} \quad *-\textrm{weakly in }
          L^\infty(\partial\Omega).
    \]
    Let $u_n$ be a positive normalized extremal for $S(\Gamma_n).$
    If we assume that $S(\Gamma_n)\to S(\Gamma)$ as $n\to+\infty,$
    we have that there exist $u\in W^{1,p}(\Omega)$ such that, for
    a subsequence still denote
    $\{u_n\}_{n\in\mathbb{N}},$
    $u_n\to u$ strongly in $W^{1,p}(\Omega)$ and
    $u_n\to u$ strongly in $L^q(\partial\Omega).$ Therefore $u$ is a
    positive normalized extremal for $S(\Gamma).$
    Moreover, by the Hopf's Lemma, $u_n>0$ on
    $\partial\Omega\setminus\Gamma_n$ and
    $u>0$ on $\partial\Omega\setminus\Gamma.$

    On the other hand, by Lemma \ref{conv.capacidad}, there exists a
    subsequence
    $\{u_{n_j}\}_{j\in\mathbb{N}}$ of $\{u_n\}_{n\in\mathbb{N}}$ and a
    set $B\subset\overline{\Omega}$ such that $cap_p(B)=0$ and
    $u_{n_j}(x)\to u$ as $j\to\infty$ for
    $x\in\overline{\Omega}\setminus B.$ Then, as $u_{n_j}(x)=0$ for all
    $x\in E$ and $j\in\mathbb{N},$ and $cap_p(E)>0,$ we have that $u(x)=0$ for all
    $x\in E,$ contrary to $u>0$ on $\partial\Omega\setminus \Gamma.$
\end{ex}

\bigskip

Next we prove the existence of an optimal boundary
hole. For this, we first need to show the following lemma.

\begin{lem}
\label{L2} For each $\alpha \in (0,1)$, $\S(\alpha)$ has also
the following characterization:
\begin{equation*}
    \S(\alpha) := \inf\left\{\frac{\|v\|_{\wp}^p}{\|v\|_{L^q(\partial\Omega)}^p}\colon
    v \in \X, \;    \H(\{v=0\}) \geq \alpha \, \H(\partial\Omega)
    \right\}.
\end{equation*}
\end{lem}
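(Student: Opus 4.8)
The plan is to prove the two inequalities $\td{\S}(\alpha)\le\S(\alpha)$ and $\S(\alpha)\le\td{\S}(\alpha)$ separately, where $\td{\S}(\alpha)$ denotes the quantity on the right-hand side of the claimed identity and, for $v\in\X$, the set $\{v=0\}$ is understood as the $\H$-measurable subset of $\partial\Omega$ on which the trace of $v$ vanishes. Each inequality is a direct comparison of infima, and the only genuinely non-formal point is hidden in the second one.

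For $\td{\S}(\alpha)\le\S(\alpha)$, I would fix an arbitrary admissible hole $\Gamma\subset\partial\Omega$ with $\H(\Gamma)=\alpha\,\H(\partial\Omega)$ and an arbitrary $u\in\X_\Gamma$. Since $u=0$ $\H$-a.e. on $\Gamma$, the set $\Gamma$ is contained, up to an $\H$-null set, in $\{u=0\}$, so $\H(\{u=0\})\ge\H(\Gamma)=\alpha\,\H(\partial\Omega)$; hence $u$ is admissible in the definition of $\td{\S}(\alpha)$ and $\td{\S}(\alpha)\le\|u\|_{\wp}^p/\|u\|_{\lq}^p$. Taking the infimum first over $u\in\X_\Gamma$ and then over all such $\Gamma$ gives $\td{\S}(\alpha)\le\S(\alpha)$ (the case $\X_\Gamma=\emptyset$ is trivial since then $S(\Gamma)=+\infty$).

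For the reverse inequality, I would fix $v\in\X$ with $\H(\{v=0\})\ge\alpha\,\H(\partial\Omega)$ and put $Z:=\{v=0\}\subset\partial\Omega$. The hard part will be to replace $Z$ by a hole of exactly the prescribed size: here one uses that the restriction of $\H=\mathcal{H}^{N-1}$ to the compact Lipschitz set $\partial\Omega$ is a finite non-atomic Borel measure (points, and more generally $\mathcal{H}^{N-2}$-finite sets, are $\mathcal{H}^{N-1}$-null on the $(N-1)$-rectifiable set $\partial\Omega$), so by the classical theorem on the range of non-atomic measures there is an $\H$-measurable set $\Gamma\subset Z$ with $\H(\Gamma)=\alpha\,\H(\partial\Omega)$ exactly. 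Since $v$ vanishes $\H$-a.e. on $Z\supset\Gamma$ we get $v\in\X_\Gamma$, whence $\S(\alpha)\le S(\Gamma)\le\|v\|_{\wp}^p/\|v\|_{\lq}^p$; taking the infimum over all such $v$ yields $\S(\alpha)\le\td{\S}(\alpha)$, and combining the two bounds proves the lemma. All that requires care is the subset-extraction step; the rest is bookkeeping with infima.
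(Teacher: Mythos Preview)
Your proof is correct and follows essentially the same two-inequality approach as the paper. Your version is in fact slightly more direct: in Step~1 you avoid appealing to the existence of extremals for $S(\Gamma)$, in Step~2 you work with an arbitrary admissible $v$ rather than a minimizing sequence, and you make explicit the non-atomicity argument behind the subset extraction $\Gamma\subset\{v=0\}$ with $\H(\Gamma)=\alpha\,\H(\partial\Omega)$, which the paper uses without comment.
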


\begin{proof}
    Let $\alpha\in(0,1)$ and
    \[
        \tilde{\S}(\alpha) := \inf \left\{\frac{\|v\|_{\wp}^p}{\|v\|_{L^q(\partial\Omega)}^p}\colon
    v \in \X, \;  \H(\{v=0\}) \geq \alpha \, \H(\partial\Omega) \right\}.
\]
We want to prove that $\S(\alpha)=\tilde{\S}(\alpha).$ For this, we
proceed in two steps.

\medskip

\noindent {\em Step 1}. First, we show that $\tilde{\S}(\alpha)\le\S(\alpha).$

Let $\Gamma$ be a subset of $\partial
    \Omega$ such that $\H(\Gamma)= \alpha \, \H(\partial \Omega)$.
    Let $u\in\X_\Gamma$ be a nonnegative extremal for $S(\Gamma)$.
    
Observe that, $u$ is an admissible function in the characterization of
$\tilde{\S}(\alpha)$ and
$$
\tilde{\S}(\alpha) \leq
\frac{\|u\|_{\wp}^p}{\|u\|_{L^q(\Omega)}^p}=S(\Gamma).
$$
Consequently, we have that $\tilde{\S}(\alpha)\leq \S(\alpha).$

\medskip

\noindent {\em Step 2}. Now, we show that
$\S(\alpha)\le\tilde{\S}(\alpha).$

Let $\{v_n\}_{n\in\mathbb{N}}$ be a minimizing
sequence of $\tilde{\S}(\alpha)$, i.e. $v_n\in \X$,
\[
\tilde{\S}(\alpha)=\lim_{n\to\infty}\frac{\|v_n\|_{\wp}^p}{\|v_n\|_{\lq}}
\quad \mbox{and} \quad \H(\{v_n=0\})\ge\alpha\H(\partial\Omega)\quad
\forall n\in\mathbb{N}.
\]
Thus, for each $n \geq 1$, we take
$$
  \Gamma_n \subset \{v_n=0 \}
$$
such that $\Gamma_n$ is $\H-$measurable and
$\H(\Gamma_n)= \alpha \, \H(\partial \Omega)$. Thus, we
have
\[
\S(\alpha)\le S(\Gamma_n)\le
\frac{\|v_n\|_{\wp}^p}{\|v_n\|_{L^q(\Omega)}^p} \quad \forall \ n\in\mathbb{N}.
\]
then, passing to the limit in the above inequality when $n\to\infty,$ we have
$$
  \S(\alpha)\le\lim_{n \to \infty} S(\Gamma_n)=  \lim_{n \to
  \infty}\frac{\|v_n\|_{\wp}^p}{\|v_n\|_{L^q(\Omega)}^p} = \tilde{\S}(\alpha).
$$
The proof is complete.
\end{proof}

\bigskip

Now, we establish the main results of this section.
\begin{te}\label{messi1} Let $0<\alpha<1$. Then, there exist:
    \renewcommand{\labelenumi}{$($\alph{enumi}$)$}
    \begin{enumerate}
        \item A set $\Gamma_0 \subset \partial \Omega$,
            such that $\H(\Gamma_0)= \alpha \, \H(\partial \Omega)$
            and $\S(\alpha) =S(\Gamma_0)$;
        \item A function $u \in \X$ with
            $\H(\{u=0\}) \geq \alpha \, \H(\partial \Omega)$,
            such that
            $$
            \S(\alpha) =\frac{\|u\|^p_{\wp}}{\|u\|_{\lq}^p}.
            $$
    \end{enumerate}
\end{te}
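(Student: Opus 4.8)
The plan is to obtain (b) by direct minimization on the relaxed problem provided by Lemma \ref{L2}, and then to read off (a) by selecting a subset of the nodal set of the resulting extremal having the prescribed measure. By Lemma \ref{L2} I would start from a minimizing sequence $\{v_n\}_{n\in\mathbb{N}}\subset\X$ for $\S(\alpha)$ with $\H(\{v_n=0\})\ge\alpha\,\H(\partial\Omega)$; replacing $v_n$ by $|v_n|/\|v_n\|_{\lq}$ (which is legitimate since $v_n\in\X$ forces $\|v_n\|_{\lq}>0$, and $|v_n|$ has the same Rayleigh quotient and the same zero set) we may assume $v_n\ge0$, $\|v_n\|_{\lq}=1$, and $\|v_n\|_{\wp}^p\to\S(\alpha)$. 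Then $\{v_n\}$ is bounded in $W^{1,p}(\Omega)$, so, along a subsequence, $v_n\rightharpoonup u$ weakly in $W^{1,p}(\Omega)$, and, since $q<p_*$, $v_n\to u$ strongly in $L^q(\partial\Omega)$ and in $L^p(\Omega)$. Consequently $u\ge0$, $\|u\|_{\lq}=1$ (hence $u\in\X$), and by weak lower semicontinuity of the norm $\|u\|_{\wp}^p\le\liminf_n\|v_n\|_{\wp}^p=\S(\alpha)$.

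The decisive step is to check that $u$ inherits the constraint, i.e. $\H(\{u=0\})\ge\alpha\,\H(\partial\Omega)$. For each $n$ choose an $\H$-measurable set $\Gamma_n\subset\{v_n=0\}$ with $\H(\Gamma_n)=\alpha\,\H(\partial\Omega)$. Since $\{\chi_{\Gamma_n}\}$ is bounded in $L^\infty(\partial\Omega)$, a further subsequence satisfies $\chi_{\Gamma_n}\stackrel{*}{\rightharpoonup}g$ in $L^\infty(\partial\Omega)$ with $0\le g\le1$ and $\int_{\partial\Omega}g\,\rd\H=\alpha\,\H(\partial\Omega)$. Because $v_n\to u$ strongly in $L^q(\partial\Omega)$, hence in $L^1(\partial\Omega)$, and $\chi_{\Gamma_n}v_n=0$ for every $n$, passing to the limit yields $\int_{\partial\Omega}g\,u\,\rd\H=0$; as $g\ge0$ and $u\ge0$ this forces $gu=0$ $\H$-a.e., so $u=0$ $\H$-a.e. on $\{g>0\}$. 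Since $g\le1$ we get $\H(\{g>0\})\ge\int_{\{g>0\}}g\,\rd\H=\alpha\,\H(\partial\Omega)$, and therefore $\H(\{u=0\}\cap\partial\Omega)\ge\H(\{g>0\})\ge\alpha\,\H(\partial\Omega)$. Thus $u$ is admissible in the characterization of Lemma \ref{L2}, whence $\S(\alpha)\le\|u\|_{\wp}^p/\|u\|_{\lq}^p=\|u\|_{\wp}^p\le\S(\alpha)$; all inequalities collapse to equalities, which gives (b).

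For (a), note that $\H$ is a finite non-atomic measure on $\partial\Omega$ and $\H(\{u=0\}\cap\partial\Omega)\ge\alpha\,\H(\partial\Omega)$, so I can pick an $\H$-measurable set $\Gamma_0\subset\{u=0\}\cap\partial\Omega$ with $\H(\Gamma_0)=\alpha\,\H(\partial\Omega)$. Then $u\in\X_{\Gamma_0}$, so by (b) $S(\Gamma_0)\le\|u\|_{\wp}^p=\S(\alpha)$, while $S(\Gamma_0)\ge\S(\alpha)$ by the very definition of $\S(\alpha)$; hence $S(\Gamma_0)=\S(\alpha)$, completing the proof.

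The main obstacle is exactly the stability of the constraint under the weak convergence: the $*$-weak limit $g$ of the characteristic functions $\chi_{\Gamma_n}$ is in general \emph{not} a characteristic function (some mass is ``smeared out''), which is precisely why the relaxed formulation of Lemma \ref{L2} is indispensable. The point that rescues the argument is the identity $gu=0$ together with $g\le1$, which converts the smeared-out mass back into a lower bound on the $\H$-measure of the nodal set of $u$. (Alternatively, one can run the same extraction directly on a minimizing sequence of holes $\Gamma_n$ with normalized extremals $u_n$, using $\|u\|_{\wp}^p\le\liminf_n S(\Gamma_n)$, which is the content behind Theorem \ref{sls}; the constraint is then recovered in the identical way.)
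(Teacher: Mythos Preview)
Your proof is correct, and the overall architecture (first establish (b) via a minimizing sequence for the relaxed problem of Lemma~\ref{L2}, then deduce (a) by cutting out a subset of the nodal set with the right measure) coincides with the paper's. The genuine difference lies in how you pass the constraint $\H(\{v_n=0\})\ge\alpha\,\H(\partial\Omega)$ to the limit.

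The paper simply extracts, from the strong $L^q(\partial\Omega)$ convergence, a further subsequence with $v_n\to u$ $\H$-a.e.\ on $\partial\Omega$; since $v_n\ge0$, for $\H$-a.e.\ $x$ with $u(x)>0$ one has $v_n(x)>0$ eventually, so $\limsup_n\chi_{\{v_n=0\}}\le\chi_{\{u=0\}}$ $\H$-a.e., and reverse Fatou gives $\H(\{u=0\})\ge\limsup_n\H(\{v_n=0\})\ge\alpha\,\H(\partial\Omega)$ directly. Your route instead passes through the weak-$*$ limit $g$ of $\chi_{\Gamma_n}$ in $L^\infty(\partial\Omega)$ and the pairing argument $\int gu=0$, which is exactly the mechanism behind Theorem~\ref{sls} but upgraded to cope with a limit $g$ that need not be a characteristic function. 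What you gain is that you never need to invoke an a.e.\ convergent subsequence on the boundary, and you make transparent why the relaxed formulation is essential (the ``smearing'' of $g$). What the paper's argument buys is brevity: once a.e.\ convergence is on the table, the constraint survives by a one-line Fatou argument, without introducing the auxiliary sets $\Gamma_n$ or the weak-$*$ limit $g$.
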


\begin{proof} We divide the proof into two steps.

{\em Step 1}. First, we prove (b).

Let $\{v_n\}_{n\in\mathbb{N}}$ be a nonnegative normalized minimizing
sequence for $\S(\alpha)$, i.e. for each $n \ge 1$,
\[
  0\le v_n\in \X,\quad \|v_n\|_{L^q(\partial \Omega)} = 1, \quad
\H(\{v_n=0\}) \ge \alpha \, \H(\partial
  \Omega), \]
and
\[
  \lim_{n\to\infty} \|v_n\|^p_{W^{1,p}(\Omega)}
  = \S(\alpha).
\]
Thus the sequence $\{v_n\}_{n\in\mathbb{N}}$ is bounded in
$W^{1,p}(\Omega)$ and, therefore there exists a function $u\in\wp$ and
a subsequence still denote $\{v_n\}_{n\in\mathbb{N}}$ such that
    \begin{eqnarray}
        \label{T1con1}
        v_n &\rightharpoonup& u \quad \textrm{ weakly in }
        W^{1,p}(\Omega),\\
        \label{T1con2} v_n &\to& u \quad \textrm{ strongly in } L^p(\Omega),\\
        \label{T1con3} v_n &\to& u \quad \textrm{ strongly in }
        L^q(\partial\Omega), \\
        \label{T1con4} v_n &\to& u \quad \textrm{$\H$-a.e. in }
        (\partial\Omega).
    \end{eqnarray}
From \eqref{T1con3} and \eqref{T1con4}, we have that
$\|u\|_{L^q(\partial \Omega)}= 1$ and
$$
  \H(\{u=0\}) \geq \limsup_{n \to \infty}
  \H(\{v_n=0\}) \geq \alpha \, \H(\partial
  \Omega).
$$
Thus, $u$ is an admissible function in the definition of
$\S(\alpha)$, and therefore
$$
  \S(\alpha) \leq \|u\|^p_{W^{1,p}(\Omega)}.
$$
The reverse inequality is clear, since from \eqref{T1con1}
$$
  \|u\|^p_{W^{1,p}(\Omega)} \leq
  \lim_{n \to \infty} \|v_n\|^p_{W^{1,p}(\Omega)} = \S(\alpha).
$$

{\em Step 2}. We show that (b) implies (a).

By (b), there exists $u\in\X$ such that
$\H(\{u=0\}) \geq \alpha \, \H(\partial \Omega)$ and
$$
\S(\alpha) =\frac{\|u\|^p_{\wp}}{\|u\|^p_{\lq}}.
$$
Thus, there exists a set $\Gamma_0\subset
\{x\in\partial\Omega\colon u(x)=0\}$ $\H-$mesurable such that
$$
 \H(\Gamma_0)= \alpha \, \H(\partial \Omega).
$$
Then
we have that
\[
S(\Gamma_0)\le \frac{\|u\|^p_{\wp}}{\|u\|^p_{\lq}}=\S(\alpha),
\]
and $\H(\Gamma_0)=\alpha\H(\partial\Omega).$ Therefore
\[
\S(\alpha)=S(\Gamma_0).
\]
This finishes the proof.
\end{proof}

\medskip

In the next Theorem we make a refinement of Theorem \ref{messi1} and prove, under further regularity assumptions on $\partial\Omega$, that for any extremal $u\in\X$, it holds that $\H(\{u=0\}) = \alpha \H(\partial\Omega)$ (i.e. $\Gamma_0 = \{u=0\}$ with the notation of the above proof).

\begin{te}\label{messi2} Let $u\in \X$ be an  extremal of $\S(\alpha).$ Then, if $\Omega$ satisfies the interior ball condition, we have that
$$
  \H(\{u=0\})= \alpha \,\H(\partial\Omega).
$$
\end{te}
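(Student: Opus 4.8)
The plan is to argue by contradiction, using that an extremal $u$ of $\S(\alpha)$ is automatically an extremal of $S(\Gamma_0)$ for \emph{every} admissible hole $\Gamma_0$ carved out of its nodal set, and then invoking the strong maximum principle (in the form of Remark \ref{RCS}) to force that nodal set to be essentially no larger than $\Gamma_0$. By construction (cf.\ Theorem \ref{messi1}(b) and Lemma \ref{L2}) an extremal $u$ of $\S(\alpha)$ satisfies $\H(\{u=0\})\ge\alpha\,\H(\partial\Omega)=:\alpha_0$, so it suffices to exclude the strict inequality $\H(\{u=0\})>\alpha_0$. Suppose it holds. Since $|u|$ is an extremal with the same nodal set, take $u\ge0$ and, after normalizing, $\|u\|_{\lq}=1$, so $\|u\|_{\wp}^p=\S(\alpha)$; write $E:=\{x\in\partial\Omega:u(x)=0\}$ for the nodal set of the trace.

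First I would fix a point $x_0\in E$ that is a Lebesgue point of the trace of $u$ with value $0$; $\H$-a.e.\ point of $E$ is such. Since a point carries no $\H$-mass on $\partial\Omega$, $\H(E\cap B_\rho(x_0))\to0$ as $\rho\to0^+$, so I may choose $\rho>0$ so small that $\H(E\setminus B_\rho(x_0))>\alpha_0$; then, as $\H$ restricted to $\partial\Omega$ has no atoms, I may select an $\H$-measurable set $\Gamma_0\subseteq E\setminus B_\rho(x_0)$ with $\H(\Gamma_0)=\alpha_0$.

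Next I would observe that $u$ vanishes $\H$-a.e.\ on $\Gamma_0$, so $u\in\X_{\Gamma_0}$, and hence, by the definition \eqref{aguero} of $\S(\alpha)$,
\[
\S(\alpha)\le S(\Gamma_0)\le\frac{\|u\|_{\wp}^p}{\|u\|_{\lq}^p}=\S(\alpha),
\]
which forces $u$ to be an extremal of $S(\Gamma_0)$, hence a nonnegative weak solution of \eqref{batista} with $\Gamma=\Gamma_0$. This is where the interior ball hypothesis enters: by Remark \ref{regularidad.interior} (boundary regularity away from $\overline{\Gamma_0}$) and Remark \ref{RCS} (Hopf's lemma) one gets $u>0$ on $\overline{\Omega}\setminus\overline{\Gamma_0}$. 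But $\Gamma_0\subseteq\partial\Omega\setminus B_\rho(x_0)$, a set closed in $\partial\Omega$, so $\overline{\Gamma_0}$ is disjoint from $B_\rho(x_0)$ and in particular $x_0\in\partial\Omega\setminus\overline{\Gamma_0}$; thus the continuous representative of $u$ is strictly positive at $x_0$. Since on the relatively open set $\partial\Omega\setminus\overline{\Gamma_0}$ this continuous representative coincides $\H$-a.e.\ with the trace of $u$, averaging over small balls around $x_0$ forces it to vanish at $x_0$ (as $x_0$ is a Lebesgue point of the trace with value $0$) --- a contradiction. Hence $\H(\{u=0\})\le\alpha_0$, which combined with the reverse inequality gives the claim.

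The step I expect to require the most care is not the maximum-principle input, which is packaged in Remark \ref{RCS}, but the interplay between the a.e.-defined trace of $u$ and its pointwise continuous representative: the hypothesis ``$u=0$ on $E$'' lives at the level of traces, while the contradiction is drawn pointwise on a good open piece of the boundary. This is exactly why $x_0$ must be taken to be a Lebesgue point of the trace and why $\Gamma_0$ is chosen to stay a fixed distance away from $x_0$, so that $x_0$ genuinely lies in the region where $u$ is continuous and, by Hopf, positive. The remaining ingredients --- the measurable selection of a subset of $E$ with prescribed $\H$-measure, and the vanishing of $\H$ on points of $\partial\Omega$ --- are routine (with $N\ge2$, as is implicit throughout).
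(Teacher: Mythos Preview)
Your argument is correct and follows essentially the same route as the paper: assume the nodal set is too large, carve out an admissible $\Gamma_0$ inside it so that $u$ becomes an extremal for $S(\Gamma_0)$, and then use Hopf's principle (Remark~\ref{RCS}) to derive a contradiction. The only cosmetic difference is that the paper picks $\Gamma_0$ \emph{closed} with $\H(\Gamma_0)$ strictly between $\alpha\,\H(\partial\Omega)$ and $\H(\{u=0\})$ (via Borel regularity of $\H$), so that $\{u=0\}\setminus\Gamma_0$ sits in the open set $\partial\Omega\setminus\Gamma_0$ where boundary regularity applies, and then reads the contradiction off the normal derivative; you instead fix a single Lebesgue point $x_0$ of the trace and keep $\Gamma_0$ at distance $\rho$ from it, reaching the same contradiction through $u>0$ on $\overline{\Omega}\setminus\overline{\Gamma_0}$. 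Your extra care in distinguishing the a.e.\ trace from its continuous representative is well placed and addresses a point the paper leaves implicit.
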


\begin{proof}
Let $u\in \X$ be an extremal of $\S(\alpha),$ i.e.
$\H(\{u=0\}) \geq \alpha \, \H(\partial\Omega)$ and
\[
\S(\alpha)=\frac{\|u\|_{\wp}^p}{\|u\|_{\lq}^p}.
\]
By contradiction, suppose the thesis were false, then
$$
  \H(\{u=0\}) > \alpha \, \H(\partial\Omega).
$$
Since $\clg{H}^s$ is a Borel regular measure $(0 \leq s <
\infty)$, see \cite{EG}, there exists a closed set $\Gamma_0\subset
\{x\in\partial\Omega\colon u(x)=0\}$ such that
$$
  \H(\{u=0\}) > \H(\Gamma_0)> \alpha \, \H(\partial \Omega).
$$
Consequently, it follows that
$$
  \S(\alpha) \leq S(\Gamma_0).
$$
On the other hand, the function $u$ is admissible in the
characterization of $S(\Gamma_0)$, hence
$$
  S(\Gamma_0) \leq\frac{\|u\|^p_{\wp}}{\|u\|^p_{\lq}}=\S(\alpha).
$$
Therefore, $\S(\alpha)=S(\Gamma_0)$ and so $u$ is also an extremal of
$S(\Gamma_0).$ Thus $u$ is a week solution of the following problem
\begin{equation}
    \begin{cases}
    \label{PT2}
        -\Delta_p u + |u|^{p-2}u = 0 & \textrm{in } \Omega,
        \\[5pt]
        |\nabla u|^{p-2}\frac{\partial u}{\partial \nu}=
        \lambda\, |u|^{q-2}u & \textrm{on } \partial\Omega\setminus
        \Gamma_0,\\[5pt]
        u=0 & \text{on } \Gamma_0,
    \end{cases}
\end{equation}
where $\lambda$ depends on the normalization of $u.$
Moreover, by Remark \ref{regularidad.interior},
$u \in C^{1,\gamma}_{loc}(\Omega \cup (\partial \Omega
\setminus \Gamma_0))$ for some $0<\gamma<1$ and we can assume that $u>0$
in $\Omega.$

Now, by our assumption on $\Omega$ we can apply Hopf's Lemma (cf. Remark \ref{RCS}), to get
\[
\frac{\partial u}{\partial\nu}>0 \quad  \mbox{on }
\{x\in\partial\Omega\colon u(x)=0\}\setminus\Gamma_0.
\]
That is a contradiction.
\end{proof}

\begin{co} The set function $\S$ is strictly increasing with respect to $\alpha$.
\end{co}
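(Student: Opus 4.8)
The plan is to prove strict monotonicity of $\S$ by combining the structural results already established. Fix $0 < \alpha_1 < \alpha_2 < 1$. Monotonicity (non-strict) is immediate from the characterization in Lemma~\ref{L2}: any $v \in \X$ with $\H(\{v=0\}) \geq \alpha_2 \H(\partial\Omega)$ also satisfies $\H(\{v=0\}) \geq \alpha_1 \H(\partial\Omega)$, so the infimum defining $\S(\alpha_2)$ is taken over a smaller admissible class, whence $\S(\alpha_1) \leq \S(\alpha_2)$. The real content is ruling out equality.

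Suppose, for contradiction, that $\S(\alpha_1) = \S(\alpha_2)$. By Theorem~\ref{messi1}(b) there is an extremal $u \in \X$ realizing $\S(\alpha_2)$, so $\H(\{u=0\}) \geq \alpha_2 \H(\partial\Omega)$ and $\S(\alpha_2) = \|u\|_{\wp}^p / \|u\|_{\lq}^p$. Since $\alpha_2 > \alpha_1$, this same $u$ is admissible in the characterization of $\S(\alpha_1)$, and by our contradiction hypothesis it is in fact an extremal for $\S(\alpha_1)$ as well, with $\H(\{u=0\}) \geq \alpha_2 \H(\partial\Omega) > \alpha_1 \H(\partial\Omega)$. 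But now Theorem~\ref{messi2} (applied at the level $\alpha_1$), under the standing interior ball assumption on $\Omega$, forces $\H(\{u=0\}) = \alpha_1 \H(\partial\Omega)$, contradicting the strict inequality $\H(\{u=0\}) > \alpha_1 \H(\partial\Omega)$ just obtained. Hence $\S(\alpha_1) < \S(\alpha_2)$.

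I expect no serious obstacle here: the argument is a short bootstrapping on the two main theorems of the section, and the only thing to be careful about is invoking Theorem~\ref{messi2} with the correct parameter (namely $\alpha_1$, not $\alpha_2$), and noting that the corollary inherits the hypothesis that $\Omega$ satisfies the interior ball condition. One may optionally give a softer, self-contained alternative for the strict inequality — take an extremal $u$ for $\S(\alpha_1)$, carve out from $\{u=0\}\subset\partial\Omega$ a hole $\Gamma$ of measure exactly $\alpha_1\H(\partial\Omega)$, observe by Hopf's lemma (Remark~\ref{RCS}) that $u$ is strictly positive on $\partial\Omega\setminus\overline\Gamma$, and then perturb $\Gamma$ by adjoining a small boundary patch where $u>0$ to produce a competitor for $\S(\alpha_2)$ with strictly larger Rayleigh quotient — but the contradiction route via Theorems~\ref{messi1} and~\ref{messi2} is cleaner and I would present that one.
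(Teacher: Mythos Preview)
Your proof is correct and essentially identical to the paper's: assume $\S(\alpha_1)=\S(\alpha_2)$, observe that an extremal for $\S(\alpha_2)$ is then also an extremal for $\S(\alpha_1)$ with zero set of boundary measure strictly exceeding $\alpha_1\H(\partial\Omega)$, and contradict Theorem~\ref{messi2} applied at the level $\alpha_1$. Your remark that the corollary implicitly inherits the interior ball hypothesis from Theorem~\ref{messi2} is apt.
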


\begin{proof}
 It is clear that $\S(\alpha)$ is nondecreasing.
 Now, if we suppose that there exists $0<\alpha<\beta<1$,
 such that $\S(\alpha)=\S(\beta)$, then an
 extremal for $\S(\beta)$ is also an extremal for
 $\S(\alpha)$.
But, if $u$ is an extremal for $\S(\beta)$, then
$$
\H(\{u=0\})=\beta \, \H(\partial\Omega)> \alpha \, \H(\partial\Omega),
$$
which is a contradiction to Theorem \ref{messi2}. Thus,
$\S$ is strictly increasing.
\end{proof}

\bigskip
\section{Example: the unit ball} \label{RSSC}

Now, we study symmetry properties of optimal holes
in the special case where $\Omega$ is the unit ball, $\Omega=B(0,1)$. First, we
recall some of the definitions and results concerning spherical
caps. We address the reader to \cite{k, Sp}.

\medskip

\subsection*{Spherical Symmetrization.} Given a measurable set
$A\subset\rn,$ the spherical symmetrization $A^*$ of $A$ is
constructed as follows: for each positive $r,$ take $A\cap\partial B(0,r)$
and replace it by the spherical cap of the same $\H-$measure and center
$re_{N}.$ This can be done for almost all $r.$ The union of
these caps is $A^*.$ Now, the spherical symmetrization $u^*$ of a given
measurable function $u\geq0$ defined on $\Omega$ is constructed by symmetrizing the
super-level sets so that, for all $t,$ $\{u^*\geq t\}=\{u\geq
t\}^*.$ See \cite{k, Sp}.

The following theorem is proved in \cite{Sp} (see also \cite{k}).

\begin{te}[\cite{Sp}] \label{ss} Let $u\in W^{1,p}(B(0,1))$ and let $u^*$ be its
spherical symmetrization. Then $u^*\in W^{1,p}(B(0,1))$ and
\begin{equation} \label{4.20}
\begin{array}{l}
\displaystyle \int_{B(0,1)} |\nabla u^*|^p \, \rd x \leq \int_{B(0,1)} |\nabla
u|^p \, \rd x,\\[12pt]
\displaystyle  \int_{B(0,1)} | u^*|^p \, \rd x  = \int_{B(0,1)} |
u|^p \, \rd x,\\[12pt]
\displaystyle \int_{\partial B(0,1)} | u^*|^q \, \rd \H = \int_{\partial B(0,1)}
|u|^q \, \rd \H.\\
\end{array}
\end{equation}
\end{te}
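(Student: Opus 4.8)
The plan is to reduce to a smooth nonnegative $u$ and then to realise the spherical symmetrization as an $L^p$-limit of \emph{polarizations} about hyperplanes through the origin, for which all three assertions become transparent.

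\emph{Reduction and the two equalities.} We may assume $u\ge0$ (replacing $u$ by $|u|$ affects none of the three integrals and does not increase $\int_{B(0,1)}|\nabla u|^p\,\rd x$, since $|\nabla |u|| = |\nabla u|$ a.e.). On each sphere $\partial B(0,r)$ the passage to the geodesic cap of the same $\H$-measure is, for a.e.\ $r$, an $L^p(\partial B(0,r))$-contraction that preserves the distribution function of $\omega\mapsto u(r\omega)$; integrating in $r$ in polar coordinates shows that $u\mapsto u^*$ is an $L^p(B(0,1))$-contraction and, taking $g(s)=|s|^p$ in $\int_{\partial B(0,r)}g(u^*)\,\rd\H=\int_{\partial B(0,r)}g(u)\,\rd\H$ and integrating against $r^{N-1}\,\rd r$, that $\int_{B(0,1)}|u^*|^p\,\rd x=\int_{B(0,1)}|u|^p\,\rd x$. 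Evaluating the same identity at $r=1$ with $g(s)=|s|^q$ yields $\int_{\partial B(0,1)}|u^*|^q\,\rd\H=\int_{\partial B(0,1)}|u|^q\,\rd\H$, once one notes that the trace of $u^*$ on $\partial B(0,1)$ is exactly the cap rearrangement on $\Se^{N-1}$ of the trace of $u$, which is immediate because $\partial B(0,1)$ is one of the spheres in the construction. Granting the gradient inequality for $u\in C^\infty(\overline{B(0,1)})$, the general case follows by density: pick $u_n\in C^\infty$ with $u_n\to u$ in $\wp$ (here $\Omega=B(0,1)$); then $u_n^*\to u^*$ in $L^p(B(0,1))$, $\{u_n^*\}$ is bounded in $\wp$ by the smooth case, so $u^*\in\wp$ and weak lower semicontinuity gives $\int|\nabla u^*|^p\le\liminf_n\int|\nabla u_n^*|^p\le\liminf_n\int|\nabla u_n|^p=\int|\nabla u|^p$, while the two equalities also pass to the limit. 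So from now on $u\in C^\infty$, $u\ge0$.

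\emph{Gradient inequality via polarization.} For a hyperplane $H=\nu^{\perp}$ through the origin, the polarization $P_Hu$ equals $\max\{u,u\circ\sigma_H\}$ on $\{x\cdot\nu>0\}$ and $\min\{u,u\circ\sigma_H\}$ on $\{x\cdot\nu<0\}$, where $\sigma_H$ is the reflection in $H$. Since $\sigma_H$ fixes $B(0,1)$ and every sphere $\partial B(0,r)$, polarization maps $\wp$ into itself, leaves $\int_{B(0,1)}|u|^p\,\rd x$ and $\int_{\partial B(0,1)}|u|^q\,\rd\H$ unchanged — it merely swaps values at reflected pairs of points — and satisfies $\int_{B(0,1)}|\nabla P_Hu|^p\,\rd x\le\int_{B(0,1)}|\nabla u|^p\,\rd x$. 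The key input is that the spherical symmetrization about the ray $\R_{\ge0}e_N$ is an $L^p(B(0,1))$-limit of iterated polarizations: there is a sequence $H_1,H_2,\dots$ of hyperplanes through the origin, each with $e_N$ in its positive half-space, such that $u_k:=P_{H_k}\cdots P_{H_1}u\to u^*$ in $L^p(B(0,1))$. Along $\{u_k\}$ the $L^p$ and trace-$L^q$ integrals are constant, the gradient integrals are non-increasing, and $\{u_k\}$ is bounded in $\wp$; hence $u_k\rightharpoonup u^*$ weakly in $\wp$ and $\int_{B(0,1)}|\nabla u^*|^p\,\rd x\le\liminf_k\int_{B(0,1)}|\nabla u_k|^p\,\rd x\le\int_{B(0,1)}|\nabla u|^p\,\rd x$, which is the claimed inequality.

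\emph{Main obstacle.} The delicate point is precisely the approximation of the spherical symmetrization by polarizations: one must choose the hyperplanes so that, on each fixed sphere, the induced iterated two-point rearrangements drive an arbitrary superlevel set — in measure — toward the geodesic cap about $e_N$, and then upgrade this per-sphere convergence to $L^p(B(0,1))$-convergence (the upgrade itself is essentially dominated convergence in $r$, using that polarization preserves the $L^\infty$ bound, so it is the universal-sequence convergence on spheres that carries the real work). A purely "direct" alternative — coarea formula plus the isoperimetric inequality for geodesic caps on each sphere — is tempting but runs into the non-separability of $(|\partial_r u|^2+r^{-2}|\nabla_{\Se}u|^2)^{p/2}$ in the radial and tangential variables, i.e.\ the coordinated way in which the rearrangements on different spheres have to be linked; the polarization argument is attractive precisely because it treats the radial and tangential contributions simultaneously and never needs this decomposition.
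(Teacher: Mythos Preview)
The paper does not give its own proof of this theorem: it is quoted from Sperner \cite{Sp} (with a pointer also to \cite{k}) and used as a black box in the proof of Theorem~\ref{sc}. So there is no in-paper argument to compare against in the strict sense.

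Your polarization route is correct in outline and is the modern way to obtain the P\'olya--Szeg\H{o} inequality for cap symmetrization. Two minor remarks. First, for a hyperplane $H$ through the origin the reflection $\sigma_H$ preserves $B(0,1)$, and polarization actually gives
\[
\int_{B(0,1)}|\nabla P_Hu|^p\,\rd x=\int_{B(0,1)}|\nabla u|^p\,\rd x
\]
with \emph{equality}, since at a.e.\ point $|\nabla P_Hu|$ coincides with $|\nabla u|$ or with $|\nabla(u\circ\sigma_H)|$ and $\sigma_H$ is an isometry; your ``$\le$'' is of course still true, and is all that is needed after passing to the weak limit. Second, the real content, as you rightly isolate under ``Main obstacle'', is the existence of a sequence of such polarizations with $u_k\to u^*$ in $L^p(B(0,1))$; you do not prove this, but it is a citable result (Brock--Solynin, and later Van Schaftingen for explicit universal sequences), and your use of it together with weak lower semicontinuity is legitimate.

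For comparison, Sperner's original argument in \cite{Sp} is the ``direct'' one you sketch and then set aside: polar coordinates, the isoperimetric inequality for geodesic caps on each sphere $\partial B(0,r)$, and a coarea/convexity computation that handles the nonseparability of $(|\partial_r u|^2+r^{-2}|\nabla_{\Se}u|^2)^{p/2}$. Your approach trades that analytic work for a single structural lemma (approximation by two-point rearrangements) and thereby never decouples radial from tangential; this is genuinely different from \cite{Sp}, arguably cleaner, but not more elementary once the cost of the approximation lemma is accounted for.
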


In this case we can prove the following.

\begin{te}\label{sc}
    Let $\Omega=B(0,1)$ and let $0<\alpha<1.$ Then, there exists an optimal boundary hole
    which is a spherical cap. Moreover, when $p=2,$ $\Gamma$ is an
    optimal boundary hole if, and only if $\Gamma$ is a spherical cap
    (up to sets of zero $\H-$measure).
\end{te}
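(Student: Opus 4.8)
The plan is to use spherical symmetrization (Theorem \ref{ss}) to handle the existence of an optimal cap, and then, in the case $p=2$, to exploit the equality case in the Pólya–Szegő-type inequality together with the regularity and strict positivity from Remarks \ref{regularidad.interior} and \ref{RCS} to show that any optimal hole must be a cap.

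For the first assertion, I would start from an extremal $u\in\X$ furnished by Theorem \ref{messi1}(b), normalized so that $u\ge 0$, $\|u\|_{\lq}=1$, and $\H(\{u=0\})\ge\alpha\,\H(\partial\Omega)$. Let $u^*$ be the spherical symmetrization of $u$. By Theorem \ref{ss}, $u^*\in W^{1,p}(B(0,1))$, $\|u^*\|_{\lq}=\|u\|_{\lq}=1$, $\|u^*\|_{L^p(B(0,1))}=\|u\|_{L^p(B(0,1))}$, and $\int|\nabla u^*|^p\le\int|\nabla u|^p$; hence $\|u^*\|_{\wp}^p\le\|u\|_{\wp}^p=\S(\alpha)$. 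Moreover the super-level sets are symmetrized with preservation of $\H$-measure, so $\{u^*=0\}$ is (up to $\H$-null sets) the complement of a spherical cap and $\H(\{u^*=0\})=\H(\{u=0\})\ge\alpha\,\H(\partial\Omega)$. Thus $u^*$ is admissible in the characterization of $\S(\alpha)$, so $\S(\alpha)\le\|u^*\|_{\wp}^p\le\S(\alpha)$, forcing equality; therefore $u^*$ is itself an extremal. Finally, choose a spherical cap $\Gamma\subset\{u^*=0\}$ with $\H(\Gamma)=\alpha\,\H(\partial\Omega)$ — possible because $\{u^*=0\}$ contains a cap of $\H$-measure $\ge\alpha\,\H(\partial\Omega)$; by Theorem \ref{messi2} the zero set of any extremal has measure exactly $\alpha\,\H(\partial\Omega)$, so in fact $\Gamma=\{u^*=0\}$ up to an $\H$-null set. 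Then $u^*$ is admissible for $S(\Gamma)$, so $S(\Gamma)\le\|u^*\|_{\wp}^p=\S(\alpha)\le S(\Gamma)$, i.e. $\Gamma$ is an optimal boundary hole which is a spherical cap.

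For the converse when $p=2$, let $\Gamma$ be an arbitrary optimal boundary hole and let $u\ge0$ be its normalized extremal. Since $S(\Gamma)=\S(\alpha)$, $u$ is also an extremal of $\S(\alpha)$, so by Theorem \ref{messi2}, $\H(\{u=0\})=\alpha\,\H(\partial\Omega)$, and by Remark \ref{RCS} (Hopf's Lemma, valid since the ball satisfies the interior ball condition) $u>0$ on $\partial B(0,1)\setminus\overline\Gamma$; combined with the regularity of Remark \ref{regularidad.interior} this gives $\{x\in\partial B(0,1):u(x)=0\}=\overline\Gamma$ up to an $\H$-null set. Now compare $u$ with $u^*$: as above $u^*$ is admissible and $\|u^*\|_{\wp}^2\le\|u\|_{\wp}^2=\S(\alpha)$, so equality holds throughout, and in particular $\int_{B(0,1)}|\nabla u^*|^2\,\rd x=\int_{B(0,1)}|\nabla u|^2\,\rd x$. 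The main obstacle is the equality case of the symmetrization inequality: I would invoke the known characterization (from the references \cite{k,Sp} on spherical symmetrization, in the spirit of Brothers–Ziemer for the classical case) that, since $u$ is an extremal hence satisfies an elliptic equation and has no flat spots on its super-level sets in the region where it is positive (again by Hopf/unique continuation type arguments), equality in $\int|\nabla u^*|^2\le\int|\nabla u|^2$ forces $u$ to be, up to rotation, equal to $u^*$; consequently $\{u=0\}$ is a spherical cap up to rotation and up to $\H$-null sets, and since $\{u=0\}=\overline\Gamma$ modulo null sets, $\Gamma$ is a spherical cap modulo $\H$-null sets. This equality-case step is the delicate point — one must check that the level-set structure of the extremal $u$ (positivity, regularity, absence of critical points on a.e. level) is strong enough to apply the rigidity version of the symmetrization inequality rather than just the inequality itself.
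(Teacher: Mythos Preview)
Your existence argument (first part) is essentially the same as the paper's: symmetrize an extremal $u$, observe $u^*$ is admissible with no larger Rayleigh quotient, so $u^*$ is extremal and its boundary zero set is a cap of the right measure.

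For the $p=2$ rigidity, your route diverges from the paper's and is more ambitious than necessary. You aim for a global Brothers--Ziemer--type conclusion, namely that equality in the Dirichlet integral forces $u=u^*\circ R$ for a \emph{single} rotation $R$, and you correctly flag this as the delicate step since it requires controlling the level-set structure of $u$ throughout the ball. The paper sidesteps this entirely: it invokes a result from \cite{D} stating that equality in the spherical P\'olya--Szeg\H{o} inequality only guarantees, for \emph{each} radius $0<r\le 1$, a rotation $R_r$ (which may depend on $r$) with $u|_{\partial B(0,r)}=(u^*\circ R_r)|_{\partial B(0,r)}$. Since the conclusion concerns only the boundary trace, one simply chooses the symmetrization axis so that $R_1=\mathrm{Id}$; then $u=u^*$ on $\partial B(0,1)$, hence $\{u=0\}\cap\partial B(0,1)$ is a spherical cap, and Theorem~\ref{messi2} gives the measure. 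This buys a much cheaper argument: no need to rule out ``flat spots'' or invoke unique continuation in the interior, because nothing about $u$ on the interior spheres is used. Your approach would also work if the global rigidity can be established for the extremal, but that is a nontrivial additional step that the paper's sphere-by-sphere argument renders unnecessary.
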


\begin{proof}
    Fix $\alpha\in(0,1)$, by the Theorem \ref{messi1}, there exists
    a function $u\in \X$ such that
    $\H(\{u=0\})=\alpha\H(\partial B(0,1))$ and
    \[
    \S(\alpha)= \frac{\|u\|_{W^{1,p}(B(0,1))}^p}{\|u\|_{L^q(\partial
    B(0,1))}^p}.
    \]
    Let $u^*$ be  the spherical symmetrization
    of $u.$ Then $u^*$ is an  admissible function in the definition
    of $\S(\alpha)$ and, by Theorem \ref{ss},
    \[
    \S(\alpha)\le\frac{\|u^*\|_{W^{1,p}(B(0,1))}^p}{\|u^*\|_{L^q(\partial
    B(0,1))}^p}\le \frac{\|u\|_{W^{1,p}(B(0,1))}^p}{\|u\|_{L^q(\partial
    B(0,1))}^p}=\S(\alpha).
    \]
    Therefore
    \begin{equation}
    \S(\alpha)=\frac{\|u^*\|_{W^{1,p}(B(0,1))}^p}{\|u^*\|_{L^q(\partial
    B(0,1))}^p}.
	    \label{auxiliar}
    \end{equation}
    Moreover,  $\Gamma:=\{x\in\partial B(0,1)\colon
    u^*(x)=0\}$ is a spherical cap and, since
    $\H(\{u=0\})=\alpha\H(\partial B(0,1)),$  we have that
    $\H(\Gamma)=\alpha\H(\partial B(0,1)).$ Then, using \eqref{auxiliar}, we get that
    \[
    \S(\alpha)= S(\Gamma).
    \]

    Now consider $p=2$. Let $\Gamma$ be an optimal boundary hole
    and let $u$ be an extremal of $S(\Gamma).$
    In this case, it is proved in \cite{D} that
    if equality holds in \eqref{4.20} then for each $0<r\le 1$ there
    exists a rotation $R_r$ such that
    \begin{equation}\label{rot}
        u\mid_{\partial B(0,r)} = (u^*\circ R_r)\mid_{\partial B(0,r)}.
    \end{equation}
    We can assume that the axis of symmetry $e_N$ was taken so that
    $R_1=Id$. Therefore $u$ and $u^*$ coincide on
    $\partial B(0,1)$. Then the set $\{x\in\partial B(0,1)\colon
    u(x)=0\}$ is an spherical cap and, by Theorem \ref{messi2},
    $\H(\{u=0\})=\alpha\H(\partial B(0,1)).$ \end{proof}

\section{Dimension reduction} \label{DR}

In this section, we are interested in the characterization of optimal boundary holes, when we shrink some of the dimensions of the set $\Omega$. This procedure of dimension reduction is interesting for such domains $\Omega$, where one of the directions is smaller than other ones. We begin with a fundamental case when the set $\Omega$ is given by a cartesian product, then we extend our
results for more general domains.

The ideas in this section follow closely the ones in \cite{FBMR} where the behavior of the best Sobolev trace constant for shrinking domains was analyzed and \cite{FBRS} where the interior set problem was studied.

\bigskip

\subsection{The product case} Let $\Omega_1$ and $\Omega_2$ be bounded domains respectively in
$\rnn$ and $\rk$, which are connected and have smooth boundaries.
Set $\Omega= \Omega_1 \times \Omega_2$ and for some \mbox{$0<\mu
<1$}, define
\begin{equation}
  \label{OMU}
  \Omega_\mu =\Omega_1 \times \mu \, \Omega_2
  = \{ (x, \mu y) \colon (x,y) \in \Omega \}.
\end{equation}
It is easy to see that $\partial \Omega_\mu= \overline{\Omega}_1 \times
\mu\partial\Omega_2
 \cup \partial \Omega_1 \times
\mu\overline{\Omega}_2$ and
\begin{equation}
     \H(\partial \Omega_\mu)=
  \mu^{k-1} \clg{H}^n(\Omega_1) \; \clg{H}^{k-1}(\partial \Omega_2)
  + \mu^k \clg{H}^{n-1}(\partial \Omega_1) \; \clg{H}^k(\Omega_2),
    \label{mborde}
\end{equation}
where we recall that $N=n+k$. Moreover we see that, formally,
$\Omega_1$ represents the boundary of $\Omega_\mu$ in the limiting
process. This fact will be made clear a posteriori.

\medskip
Now let $u_\mu$ be a function defined in $\Omega_\mu$. We define, for each $(x,y) \in \Omega$,
$$
  v_\mu(x,y)= u_\mu(x, \mu y).
$$
Then, $v_\mu$ is defined in $\Omega$ and enjoys the same regularity than $u_\mu$. More precisely, we have the following

\begin{lem}
\label{LUVM} If $u_\mu \in W^{1,p}(\Omega_\mu)$, then $v_\mu \in W^{1,p}(\Omega)$. Moreover,
$$
\begin{aligned}
    \H\left(\{u_\mu= 0\} \cap \partial \Omega_\mu\right)
    =&\mu^{k-1} \; \H\left(\{v_\mu= 0\} \cap
    (\Omega_1 \times \partial \Omega_2) \right)
    \\
    &+ \mu^k \;
    \H\left(\{v_\mu= 0\} \cap (\partial \Omega_1 \times
    \Omega_2)
    \right).
\end{aligned}
$$
\end{lem}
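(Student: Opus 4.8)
The plan is to regard the anisotropic dilation $T\colon\Omega\to\Omega_\mu$, $T(x,y)=(x,\mu y)$, as a bi-Lipschitz diffeomorphism, and to deduce both assertions from change-of-variables formulas: the ordinary one in $\Omega$ for the membership $v_\mu\in\wp$, and the area formula on the $(N-1)$-rectifiable pieces of the boundary for the measure identity.

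For the first claim, since $T$ and $T^{-1}$ are Lipschitz and $v_\mu=u_\mu\circ T$, the composition of a Sobolev function with a bi-Lipschitz map remains Sobolev, and the chain rule gives $\nabla_x v_\mu(x,y)=(\nabla_x u_\mu)(x,\mu y)$ and $\nabla_y v_\mu(x,y)=\mu\,(\nabla_y u_\mu)(x,\mu y)$. Performing the substitution $z=\mu y$, with $\rd z=\mu^k\,\rd y$, one checks that $\|v_\mu\|_{\wp}^p\le C(\mu,k)\,\|u_\mu\|_{W^{1,p}(\Omega_\mu)}^p<\infty$, so $v_\mu\in\wp$. This step is routine.

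For the second claim, recall from the description of $\partial\Omega_\mu$ that
$$
\partial\Omega_\mu=\big(\overline{\Omega}_1\times\mu\partial\Omega_2\big)\cup\big(\partial\Omega_1\times\mu\overline{\Omega}_2\big),
$$
and that these two pieces meet only along $\partial\Omega_1\times\mu\partial\Omega_2$, a set of Hausdorff dimension $(n-1)+(k-1)=N-2$, hence $\H$-null; for the same reason we may, up to $\H$-null sets, replace $\overline{\Omega}_1$ by $\Omega_1$ and $\overline{\Omega}_2$ by $\Omega_2$ throughout. The map $T$ carries the $(N-1)$-dimensional $C^1$ submanifold $\Omega_1\times\partial\Omega_2$ onto $\Omega_1\times\mu\partial\Omega_2$; restricted to this submanifold, the tangential differential of $T$ acts as the identity on the $n$-dimensional tangent space of $\Omega_1$ and as $\mu\,\Id$ on the $(k-1)$-dimensional tangent space of $\partial\Omega_2$, so its tangential Jacobian is the constant $\mu^{k-1}$. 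Since $T$ is bi-Lipschitz and respects the product structure, the trace of $v_\mu$ on this piece equals the trace of $u_\mu$ on $\Omega_1\times\mu\partial\Omega_2$ composed with $T$; in particular $\{v_\mu=0\}\cap(\Omega_1\times\partial\Omega_2)=T^{-1}\big(\{u_\mu=0\}\cap(\Omega_1\times\mu\partial\Omega_2)\big)$ up to $\H$-null sets, and the area formula gives
$$
\H\big(\{u_\mu=0\}\cap(\Omega_1\times\mu\partial\Omega_2)\big)=\mu^{k-1}\,\H\big(\{v_\mu=0\}\cap(\Omega_1\times\partial\Omega_2)\big).
$$
Arguing identically on the $(N-1)$-dimensional submanifold $\partial\Omega_1\times\Omega_2$, on which the tangential differential of $T$ is the identity on the $(n-1)$-dimensional tangent space of $\partial\Omega_1$ and $\mu\,\Id$ on the full $k$-dimensional tangent space of $\Omega_2$, the tangential Jacobian is $\mu^{k}$ and
$$
\H\big(\{u_\mu=0\}\cap(\partial\Omega_1\times\mu\Omega_2)\big)=\mu^{k}\,\H\big(\{v_\mu=0\}\cap(\partial\Omega_1\times\Omega_2)\big).
$$
Adding these two identities — legitimate because the relevant pieces of $\partial\Omega_\mu$, as well as those of $\partial\Omega$, overlap only in $\H$-null sets — yields the stated decomposition.

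The interior change of variables is routine; the step demanding the most care is the boundary bookkeeping: correctly identifying the two $(N-1)$-rectifiable pieces of $\partial\Omega_\mu$ and of $\partial\Omega$, computing the tangential Jacobian of the map $T$ (which is degenerate in the $y$-directions) on each of them, checking that all overlaps are $\H$-null, and justifying that the zero set of the trace of $v_\mu$ corresponds under $T$ to the zero set of the trace of $u_\mu$. I would isolate the tangential-Jacobian computation on each boundary piece as the key point.
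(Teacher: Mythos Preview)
Your proposal is correct and follows essentially the same approach as the paper: both proofs use the bi-Lipschitz map $T(x,y)=(x,\mu y)$, decompose $\partial\Omega_\mu$ into its two product pieces, and compute the scaling factor on each piece to obtain $\mu^{k-1}$ and $\mu^k$. The only cosmetic difference is that the paper exploits the product structure to write $\H$ on each piece as an iterated integral (Fubini) and scales the $\partial\Omega_2$ or $\Omega_2$ factor directly, whereas you phrase the same computation via the area formula and the tangential Jacobian of $T$; your extra care about the $\H$-null overlap $\partial\Omega_1\times\mu\partial\Omega_2$ and the correspondence of traces under $T$ is welcome but not strictly needed beyond what the paper assumes implicitly.
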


\begin{proof}
The regularity of $v_\mu$ is clear. On the other hand, since $\chi_B \equiv
\chi_A\circ T_\mu$, where
$$
  A= \{(x,\zeta) \in \Omega_\mu \,;\, u_\mu(x,\zeta)= 0\},
  \quad
  B= \{(x,y) \in \Omega \,;\, v_\mu(x,y)= 0\},
$$
and $T_\mu\colon \Omega\to\Omega_\mu$
$T_\mu(x,y)=(x,\mu y).$ We have that,
\begin{align*}
    \H(A)&=\int_{\partial\Omega_\mu}\chi_A\,\rd\H\\
    &=\iint_{\Omega_1\times\mu\partial\Omega_2}\chi_A\,\rd\mathcal{H}^{k-1}
    \rd x+
        \iint_{\partial\Omega_1\times\mu\Omega_2}\chi_A\,\rd\mathcal{H}^{n-1}
    \rd y\\
    &=\mu^{k-1}\iint_{\Omega_1\times\partial\Omega_2}\chi_B\,\rd
    \mathcal{H}^{k-1}\rd x+
        \mu^k\iint_{\partial\Omega_1\times\Omega_2}\chi_B\,\rd\mathcal{H}^{n-1}
    \rd y\\
    &=\mu^{k-1}\H\left(B\cap\left(\Omega_1\times\partial\Omega_2\right)
    \right)+\mu^k
    \H\left(B\cap\left(\partial\Omega_1\times\Omega_2\right)\right).
\end{align*}
The proof is now complete.
\end{proof}

\medskip
In the remainder of this section,
we consider subcritical exponents $1\le q<p^*,$ where $p^*$ is the critical exponent for the Sobolev
embedding $W^{1,p}(\Omega_1)\hookrightarrow L^q(\Omega_1)$, given by
$$
  p^*= \frac{pn}{n-p} \mbox{ if } 1\le p<n \mbox{ or } p^*=\infty \mbox{ if } p\ge n.
$$

\medskip

Given $\alpha,\mu \in (0,1)$, we define
\[
\S_\mu(\alpha) := \inf\left\{S(\Gamma)\colon
\Gamma\subset\partial\Omega_\mu,
\H(\Gamma)\ge\alpha\H(\partial\Omega_\mu)\right\}
\]
and
\[
\Se(\alpha) := \inf\left\{\frac{\|v\|^p_{W^{1,p}(\Omega_1)}}{\|v\|_{L^q(\Omega_1)}^p}
\colon v\in W^{1,p}(\Omega_1),\, \mathcal{H}^{n}\left(\left\{x\in\Omega_1\colon
v(x)=0\right\}\right)\ge\alpha\mathcal{H}^n(\Omega_1)\right\}.
\]
Observe that $\Se(\alpha)$ is the best Sobolev constant of the embedding
$W^{1,p}(\Omega_1)\subset L^q(\Omega_1)$ for functions that vanish
on a subset of $\Omega_1$ of a given positive measure greater than or
equal to $\alpha\HH^n(\Omega_1).$

\medskip

\begin{ob}\label{aux11}
Arguing as in section 2 (cf. with \cite{FBRW2} where the interior set case is studied), we can prove that for every $0<\alpha<1$ there exists $v_\alpha\in W^{1,p}(\Omega_1)$ such that
\[\HH^n(\{x\in\Omega_1\colon v_\alpha(x)=0\})=\alpha\HH^n(\Omega_1)
\quad \mbox{ and }\quad
\Se(\alpha)=\frac{\|v_\alpha\|^p_{W^{1,p}(\Omega_1)}}
{\|v_\alpha\|_{L^q(\Omega_1)}^p}.
\]
Moreover, $\Se(\alpha)$ is strictly increasing as a function of $\alpha.$
\end{ob}

Next, we give a characterization of the asymptotic, as $\mu\to 0^+$, behavior of $\S_\mu(\alpha)$. In fact, we see that, properly rescaled, the limit behavior is given by $\Se(\alpha).$

In order to do this, we need a couple of lemmas. The first one is easy and was proved in \cite{FBGR}.

\begin{lem}[\cite{FBGR}, Lemma 3.1]
    \label{laux0}Let $\Omega_1\subset\mathbb{R}^n$ be a domain and let
    $f_j,f\colon\Omega_1\to\mathbb{R}$ be nonnegative
    measurable functions ($j=1, 2, \dots$) such that $f_j\to f$ a.e. in
    $\Omega_1.$ Set $A_j=\{x\in\Omega_1\colon f_j(x)=0\}$ and
    $A=\{x\in\Omega_1\colon f(x)=0\}$ and suppose that
    $\HH^n(A_j)\to\HH^n(A)$ as $j\to+\infty.$ Then
    \[
    \lim_{j\to+\infty}\H(A_j\Delta A)=0.
    \]
\end{lem}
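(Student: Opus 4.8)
The plan is to write the symmetric difference as $\HH^n(A_j\Delta A)=\HH^n(A_j\setminus A)+\HH^n(A\setminus A_j)$ and to show that each of the two pieces tends to $0$: the first one directly from the pointwise convergence $f_j\to f$, and the second one by combining the first estimate with the mass hypothesis $\HH^n(A_j)\to\HH^n(A)$.

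First I would record that, since $\Omega_1$ is bounded, $\HH^n(\Omega_1)<\infty$, so dominated convergence is available for uniformly bounded sequences. Then, on the set of full measure where $f_j(x)\to f(x)$, note that if $x\notin A$ then $f(x)>0$, hence $f_j(x)>0$ for all large $j$, so $\chi_{A_j}(x)\to 0$; in other words $\chi_{A_j}\to 0$ a.e.\ on $\Omega_1\setminus A$. Since $0\le\chi_{A_j}\le 1$, dominated convergence gives
\[
\HH^n(A_j\setminus A)=\int_{\Omega_1\setminus A}\chi_{A_j}\,\rd\HH^n\longrightarrow 0.
\]

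Next I would use the elementary identity
\[
\HH^n(A_j)=\HH^n(A_j\cap A)+\HH^n(A_j\setminus A)=\HH^n(A)-\HH^n(A\setminus A_j)+\HH^n(A_j\setminus A),
\]
which rearranges to $\HH^n(A\setminus A_j)=\bigl(\HH^n(A)-\HH^n(A_j)\bigr)+\HH^n(A_j\setminus A)$. By hypothesis the first bracket tends to $0$, and the second term was just shown to tend to $0$; hence $\HH^n(A\setminus A_j)\to 0$. Adding the two halves yields $\HH^n(A_j\Delta A)\to 0$, as claimed.

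The only point that is not completely routine — and where the hypothesis $\HH^n(A_j)\to\HH^n(A)$ is genuinely needed — is the control of $\HH^n(A\setminus A_j)$: pointwise convergence alone gives nothing here, since a point with $f(x)=0$ may well have $f_j(x)>0$ for every $j$ (while $f_j(x)\to 0$). It is precisely the convergence of the masses, together with the already established $\HH^n(A_j\setminus A)\to 0$, that forces $\HH^n(A\setminus A_j)\to 0$; the rest is bookkeeping.
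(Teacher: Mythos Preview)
The paper does not prove this lemma; it simply quotes it from \cite{FBGR} with the remark that it ``is easy.'' Your argument is correct and is the natural one: pointwise convergence forces $\chi_{A_j\setminus A}\to 0$ a.e., dominated convergence turns this into $\HH^n(A_j\setminus A)\to 0$, and then the bookkeeping identity $\HH^n(A\setminus A_j)=\HH^n(A)-\HH^n(A_j)+\HH^n(A_j\setminus A)$ together with the mass hypothesis handles the other half.

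One small remark. You write ``since $\Omega_1$ is bounded,'' but the lemma as stated does not literally assume this. In the paper's context it is fine---Section~\ref{DR} opens with ``Let $\Omega_1$ and $\Omega_2$ be bounded domains''---but it is worth noting that some finiteness hypothesis is genuinely needed for the argument and for the result. On an unbounded $\Omega_1$ one can let the zero sets $A_j$ slide off to infinity while keeping $\HH^n(A_j)=\HH^n(A)$ constant (take $f=\dist(\cdot,[0,1])$ on $\R$, and $f_j$ equal to $f+1/j$ on $[0,1]$, zero on $[j,j+1]$, and equal to $f$ elsewhere); then $\HH^n(A_j\Delta A)=2$ for all large $j$. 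So your use of finite measure is not just a convenience, it is essential.
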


The second lemma gives the right continuity of $\Se(\alpha)$ with respect to $\alpha$.

\begin{lem}\label{laux1}
Let $1\le p<n$, $1\le q < p^*$ and $0<\alpha_0 <1$.
Then,
\[
\lim_{\alpha\to\alpha_0^+}\Se(\alpha) = \Se(\alpha_0).
\]
Moreover, if we denote by $v_\alpha$ a nonnegative extremal for $\Se(\alpha)$
normalized such that $\|v_\alpha\|_{L^q(\Omega_1)}=1,$ then there exists a
sequence $\{\alpha_j\}_{j\in\mathbb{N}}$, $\alpha_j>0$ for every $j\in\mathbb{N}$, such that $\alpha_j\to \alpha_0^+$ as $j\to+\infty$ and
\begin{equation}
    \lim_{j\to+\infty}v_{\alpha_j} = v \quad\mbox{strongly in }
W^{1,p}(\Omega_1),
    \label{confuerte}
\end{equation}
where $v$ is a nonnegative extremal for $\Se(\alpha_0).$

Lastly, if $A_j = \{x\in\Omega_1\colon v_{\alpha_j}(x)=0\}$ and
$A=\{x\in\Omega_1\colon v(x)=0\},$ we have that
\begin{equation}
    \lim_{j\to+\infty}\HH^n(A_j\Delta A)=0.
    \label{conmed}
\end{equation}
\end{lem}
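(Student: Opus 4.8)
The plan is to prove the three assertions in order, exploiting the variational characterization of $\Se(\alpha)$ together with the monotonicity and the direct method. First I would establish the right-continuity. Since $\Se$ is nondecreasing (by Remark \ref{aux11}, in fact strictly increasing), the limit $\lim_{\alpha\to\alpha_0^+}\Se(\alpha)=:L$ exists and satisfies $L\ge\Se(\alpha_0)$. For the reverse inequality, fix $\delta>0$ and pick a nonnegative $v\in W^{1,p}(\Omega_1)$ admissible for $\Se(\alpha_0)$, i.e. $\HH^n(\{v=0\})\ge\alpha_0\HH^n(\Omega_1)$, with $\|v\|_{W^{1,p}(\Omega_1)}^p/\|v\|_{L^q(\Omega_1)}^p\le\Se(\alpha_0)+\delta$. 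The obstruction is that such a $v$ need not be admissible for $\Se(\alpha)$ when $\alpha>\alpha_0$, since its zero set may have measure exactly $\alpha_0\HH^n(\Omega_1)$. To fix this I would enlarge the zero set: choose a ball $B\subset\Omega_1$ with $B\cap\{v=0\}$ of small measure and replace $v$ by $v_\eta := v\,\psi_\eta$, where $\psi_\eta$ is a Lipschitz cutoff vanishing on a slightly shrunken ball $B_\eta\subset B$ and equal to $1$ outside $B$; as $\eta\to 0$ one has $v_\eta\to v$ in $W^{1,p}(\Omega_1)$ and $\|v_\eta\|_{L^q(\Omega_1)}\to\|v\|_{L^q(\Omega_1)}$, while $\HH^n(\{v_\eta=0\})\ge\alpha_0\HH^n(\Omega_1)+\HH^n(B_\eta\setminus\{v=0\})$ can be made $\ge\alpha\HH^n(\Omega_1)$ for $\alpha$ close enough to $\alpha_0$. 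Hence $\Se(\alpha)\le\|v_\eta\|^p_{W^{1,p}}/\|v_\eta\|^p_{L^q}$, and letting first $\alpha\to\alpha_0^+$ (with $\eta$ fixed), then $\eta\to0$, then $\delta\to0$, yields $L\le\Se(\alpha_0)$.

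Next I would produce the approximating sequence. Take any $\alpha_j\to\alpha_0^+$ with $\alpha_j>0$ and let $v_{\alpha_j}$ be nonnegative normalized extremals, $\|v_{\alpha_j}\|_{L^q(\Omega_1)}=1$; these exist by Remark \ref{aux11}. Then $\|v_{\alpha_j}\|_{W^{1,p}(\Omega_1)}^p=\Se(\alpha_j)\to\Se(\alpha_0)$ by the first part, so $\{v_{\alpha_j}\}$ is bounded in $W^{1,p}(\Omega_1)$. Passing to a subsequence (not relabeled), $v_{\alpha_j}\rightharpoonup v$ weakly in $W^{1,p}(\Omega_1)$, strongly in $L^q(\Omega_1)$ (compact Sobolev embedding, using $q<p^*$), hence $\|v\|_{L^q(\Omega_1)}=1$, and a.e. in $\Omega_1$ along a further subsequence. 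By Fatou applied to the characteristic functions $\chi_{\{v_{\alpha_j}=0\}}$ one gets $\HH^n(\{v=0\})\ge\limsup_j\HH^n(\{v_{\alpha_j}=0\})\ge\limsup_j\alpha_j\HH^n(\Omega_1)=\alpha_0\HH^n(\Omega_1)$, so $v$ is admissible for $\Se(\alpha_0)$ and $\Se(\alpha_0)\le\|v\|_{W^{1,p}(\Omega_1)}^p$. On the other hand, weak lower semicontinuity of the norm gives $\|v\|_{W^{1,p}(\Omega_1)}^p\le\liminf_j\|v_{\alpha_j}\|_{W^{1,p}(\Omega_1)}^p=\Se(\alpha_0)$. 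Thus $v$ is a nonnegative extremal for $\Se(\alpha_0)$, equality holds throughout, and in particular $\|v_{\alpha_j}\|_{W^{1,p}(\Omega_1)}\to\|v\|_{W^{1,p}(\Omega_1)}$. Combined with weak convergence in the uniformly convex space $W^{1,p}(\Omega_1)$, this upgrades to strong convergence $v_{\alpha_j}\to v$ in $W^{1,p}(\Omega_1)$, proving \eqref{confuerte}.

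Finally, the last claim \eqref{conmed} follows from the previous steps combined with Lemma \ref{laux0}. Indeed, along the selected subsequence we have $v_{\alpha_j}\to v$ a.e. in $\Omega_1$; moreover, from the equality $\HH^n(\{v=0\})\ge\limsup_j\HH^n(\{v_{\alpha_j}=0\})\ge\liminf_j\HH^n(\{v_{\alpha_j}=0\})$ one needs $\HH^n(\{v_{\alpha_j}=0\})\to\HH^n(\{v=0\})$. This I would obtain by noting that $v$ is an extremal for $\Se(\alpha_0)$, so by the strict monotonicity of $\Se$ (Remark \ref{aux11}), exactly as in the proof of the Corollary after Theorem \ref{messi2}, one must have $\HH^n(\{v=0\})=\alpha_0\HH^n(\Omega_1)$; likewise $\HH^n(\{v_{\alpha_j}=0\})=\alpha_j\HH^n(\Omega_1)$ since $v_{\alpha_j}$ is an extremal for $\Se(\alpha_j)$ — this is the analogue of Theorem \ref{messi2} for the interior-set problem, valid here without any geometric hypothesis on $\Omega_1$ because the zero set lies in the interior. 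Hence $\HH^n(A_j)=\alpha_j\HH^n(\Omega_1)\to\alpha_0\HH^n(\Omega_1)=\HH^n(A)$, and Lemma \ref{laux0} gives $\HH^n(A_j\Delta A)\to 0$.

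I expect the main obstacle to be the right-continuity of $\Se$: the enlargement-of-zero-set construction must be carried out so that the competitor's $L^q$ norm and Sobolev norm are controlled while its zero set grows by a prescribed small amount, and one has to be a little careful that the cutoff region can indeed be chosen disjoint (up to small measure) from the existing zero set. The other two parts are then routine compactness and an appeal to the already-established strict monotonicity.
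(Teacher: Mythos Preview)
Your overall strategy coincides with the paper's: monotonicity gives $L\ge\Se(\alpha_0)$, a cutoff competitor gives $L\le\Se(\alpha_0)$, then compactness plus norm convergence upgrades weak to strong convergence, and finally Lemma~\ref{laux0} handles the symmetric difference. Steps 2 and 3 are essentially the paper's arguments.

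The genuine gap is in Step 1. You assert that $v_\eta:=v\psi_\eta\to v$ in $W^{1,p}(\Omega_1)$ as $\eta\to0$, but your description of $\psi_\eta$ (vanishing on $B_\eta\subset B$, equal to $1$ outside the \emph{fixed} ball $B$) does not yield this: under either natural reading (either $B_\eta$ shrinks to a point with $B$ fixed, or $B_\eta$ fills up $B$), $v_\eta$ converges to something other than $v$ in $W^{1,p}$. The point is that the gradient of the cutoff contributes a term $\|v\nabla\psi_\eta\|_{L^p}$ that does not vanish unless both the inner and outer balls shrink together. The paper takes $x_0$ a density point of $\Omega_1\setminus\{v_{\alpha_0}=0\}$, sets $\eta_\varepsilon(x)=\eta((x-x_0)/\varepsilon)$ with $\eta=0$ on $B(0,1)$ and $\eta=1$ outside $B(0,2)$, and then controls the bad term via H\"older:
\[
\frac{C}{\varepsilon}\|v_{\alpha_0}\|_{L^p(B(x_0,2\varepsilon)\setminus B(x_0,\varepsilon))}\le C\,\|v_{\alpha_0}\|_{L^{p^*}(B(x_0,2\varepsilon)\setminus B(x_0,\varepsilon))}\to 0,
\]
using that $n(1/p-1/p^*)=1$. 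This is precisely where the hypothesis $1\le p<n$ is consumed; your write-up never uses it, which is a sign that the estimate is missing.

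A smaller point on Step 3: you invoke an interior analogue of Theorem~\ref{messi2} to get $\HH^n(A_j)=\alpha_j\HH^n(\Omega_1)$ for every $j$. That works, but is unnecessary. The paper simply shows $\HH^n(A)=\alpha_0\HH^n(\Omega_1)$ (by strict monotonicity, as you do) and then squeezes: from $\alpha_0\HH^n(\Omega_1)\le\alpha_j\HH^n(\Omega_1)\le\HH^n(A_j)$ and $\limsup_j\HH^n(A_j)\le\HH^n(A)=\alpha_0\HH^n(\Omega_1)$ one gets $\HH^n(A_j)\to\HH^n(A)$ directly, and Lemma~\ref{laux0} applies.
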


\begin{proof}For this, we proceed in three steps.

\noindent {\em Step 1}. First, we prove that
$\Se(\alpha)\to\Se(\alpha_0)$ as $\alpha\searrow \alpha_0 .$

We begin by observing that, since $\Se(\cdot)$ is increasing by Remark \ref{aux11}, there exists
    \begin{equation}
    \mathcal{L}=\lim_{\alpha \to \alpha_0^+}\Se(\alpha)\quad \mbox{ and } \quad
    \mathcal{L}\ge\Se(\alpha_0).
        \label{primero}
    \end{equation}

        On the other hand, by Remark    \ref{aux11}, there exists $v_{\alpha_0}\in W^{1,p}(\Omega_1)$
    an extremal of $\Se(\alpha_0)$
    such that $\|v_{\alpha_0}\|_{L^q(\Omega_1)}=1$ and
    \[
    \HH^n(A_{\alpha_0})=\alpha_0\HH^n(\Omega_1),
    \]
    where $A_{\alpha_0} = \{x\in\Omega_1\colon v_{\alpha_0}(x) = 0\}$.

Now we choose a smooth function $\eta$ satisfying
    \[
    \begin{cases}
    \eta = 0 \mbox{ in } B(0,1), \\
    \eta = 1 \mbox { in } \mathbb{R}^n\setminus
    B(0,2),\\
    0\le\eta\le1 \mbox{ and }
    \|\nabla\eta\|_{L^\infty(\mathbb{R}^n)}\le 2 .
        \end{cases}
    \]

Take $x_0\in\Omega_1\setminus A_{\alpha_0}$ a point of density one (see definition in Chapter 1.7 of \cite{EG}) and for each $\varepsilon>0,$ set
    $\eta_\varepsilon(x)=\eta(\frac{x-x_0}{\varepsilon})$ and
    $w_\varepsilon=\eta_\varepsilon v_{\alpha_0} \in
    W^{1,p}(\Omega).$ Observe that
    \begin{equation}
        \HH^n\left(\{x\in\Omega_1\colon w_\varepsilon(x)=0\}\right)>
        \alpha_0\HH^n(\Omega_1),
        \label{med1}
    \end{equation}
     for $\varepsilon$ sufficiently small and
    \begin{equation}
        \lim_{\varepsilon\to0^+}\|w_\varepsilon\|_{L^q(\Omega_1)}=
        \|v_\alpha\|_{L^q(\Omega_1)}\quad\forall q\in[1,p^*].\label{wconv}
            \end{equation}
    Moreover
    \begin{align*}
        \|\nabla
        w_\varepsilon\|_{L^p(\Omega_1)}
        &\le\|\nabla\eta_\varepsilon
        v_{\alpha_0} + \eta_\varepsilon \nabla
        v_{\alpha_0}\|_{L^p(\Omega_1)}\\
        &\le\|\nabla\eta_\varepsilon
        v_{\alpha_0}\|_{L^p(\Omega_1)} + \|\nabla
        v_{\alpha_0}\|_{L^p(\Omega_1)}\\
        &\le
        \frac{C}{\varepsilon}
        \|v_{\alpha_0}\|_{L^p(B(x_0,2\varepsilon)\setminus
        B(x_0,\varepsilon))} + \|\nabla
        v_{\alpha_0}\|_{L^p(\Omega_1)}
    \end{align*}
    and, by H\"{o}lder's inequality,  we get that
    \begin{equation}
        \|\nabla
        w_\varepsilon\|_{L^p(\Omega_1)}\le
        C
        \|v_{\alpha_0}\|_{L^{p^*}(B(x_0,2\varepsilon)\setminus
        B(x_0,\varepsilon))} + \|\nabla
        v_{\alpha_0}\|_{L^p(\Omega_1)},
        \label{desimp}
    \end{equation}
    where $C$ is a constant independent of $\varepsilon$.
    Then, by \eqref{med1}, there exist $\delta>0$ such that
    \[
    \HH^n\left(\{x\in\Omega_1\colon w_\varepsilon(x)=0\}\right)>
        \alpha\HH^n(\Omega_1) \quad\forall 0<\alpha-\alpha_0<\delta.
    \]
    Therefore, $w_\varepsilon$ is an admissible function in the
    definition of $\Se(\alpha)$ and, using \eqref{desimp}, we
    have that
    \begin{align*}
        S(\alpha)&\le
        \frac{\|w_\varepsilon\|_{W^{1,p}(\Omega_1)}^p}
        {\|w_\varepsilon\|_{L^q(\Omega_1)}^p}\\
        &\le\frac{\left(C\|v_{\alpha_0}\|_{L^{p^*}
        (B(x_0,2\varepsilon)\setminus
        B(x_0,\varepsilon))} + \|\nabla
        v_{\alpha_0}\|_{L^p(\Omega_1)}\right)^p+
        \|w_\varepsilon\|_{L^p(\Omega_1)}^p}
        {\|w_\varepsilon\|_{L^q(\Omega_1)}^p}
    \end{align*}
    for all $\alpha>\alpha_0.$ Then, by \eqref{primero},
    \[
    \mathcal{L}\le\frac{\left(C\|v_{\alpha_0}\|_{L^{p^*}
        (B(x_0,2\varepsilon)\setminus
        B(x_0,\varepsilon))} + \|\nabla
        v_{\alpha_0}\|_{L^p(\Omega_1)}\right)^p+
        \|w_\varepsilon\|_{L^p(\Omega_1)}^p}
        {\|w_\varepsilon\|_{L^q(\Omega_1)}^p}\quad\forall\varepsilon>0.
    \]
    Lastly, taking limit as $\varepsilon\to0^+$ and using
    \eqref{wconv} and \eqref{primero}, we get that
    \[
    \mathcal{L}\le\frac{\|v_{\alpha_0}\|_{W^{1,p}(\Omega_1)}^p}
    {\|v_{\alpha_0}\|_{L^q(\Omega_1)}^p}=\Se(\alpha_0)\le\mathcal{L}.
    \]
    Then, we have that
    \begin{equation}
        \lim_{\alpha\to\alpha_0^+}\Se(\alpha)=\Se(\alpha_0),
        \label{contSS}
    \end{equation}
    as we wanted to show.

    \medskip

    \noindent {\em Step 2}. Now, we prove that \eqref{confuerte}
    holds.

    Let $v_\alpha$ be a nonnegative extremal for
    $\Se(\alpha)$ normalized such that
    $\|v_\alpha\|_{L^q(\Omega_1)}=1.$ Thus, by \eqref{contSS}, we have
    that
    \begin{equation}\label{contSS1}
        \Se(\alpha_0)=\lim_{\alpha\to\alpha_0^+}\Se(\alpha)=\lim_{\alpha\to\alpha_0^+}
    \|v_\alpha\|_{W^{1,p}(\Omega_1)}^p,
    \end{equation}
    and therefore $\{v_\alpha\}$ is bounded in $W^{1,p}(\Omega_1).$
    Then, there exists a sequence $\{\alpha_j\}$ such that
    $\alpha_j\to\alpha_0^+$ as $j\to+\infty$ and
    \begin{eqnarray}
        \label{TT1ccon1}
    v_{\alpha_j} &\rightharpoonup& v \quad \textrm{ weakly in }
        W^{1,p}(\Omega_1),\\
    \label{TT1ccon2} v_{\alpha_j} &\to& v \quad \textrm{ strongly in }
    L^p(\Omega_1),\\
    \label{TT1ccon3} v_{\alpha_j} &\to& v \quad \textrm{ strongly in }
        L^q(\Omega_1), \\
    \label{TT1ccon4} v_{\alpha_j} &\to& v \quad \textrm{$\HH^n$-a.e. in }
        (\Omega_1),
    \end{eqnarray}
    where $v\in W^{1,p}(\Omega_1).$ Since
    $\|v_{\alpha_j}\|_{L^q(\Omega_1)}=1$ for all $j\in\mathbb{N},$ using
    \eqref{TT1ccon3}, we have that $\|v\|_{L^q(\Omega_1)}=1$ and by
    \eqref{TT1ccon4} $v$ is nonnegative. By
    \eqref{contSS1}, \eqref{TT1ccon1} and \eqref{TT1ccon2}, we get that
    \begin{equation}\label{contSS3}
        \Se(\alpha_0)=\lim_{j\to+\infty}
        \|v_{\alpha_j}\|_{W^{1,p}(\Omega_1)}^p\ge\|v\|_{W^{1,p}(\Omega_1)}^p,
    \end{equation}
    and using \eqref{TT1ccon4}, we have that
    \begin{equation}
    \alpha_0\HH^n(\Omega_1)\le\liminf_{j\to+\infty}\HH^n(A_j)
    \le\limsup_{j\to+\infty}\HH^n(A_j)\le\HH^n(A),
        \label{desmed}
    \end{equation}
    where $A_j=\{x\in\Omega_1\colon v_{\alpha_j}(x)=0\}$ and
    $A=\{x\in\Omega_1\colon v(x)=0\}.$ Then, $v$ is an admissible
    function in the definition of $\Se(\alpha_0),$ and using
    \eqref{contSS3}, we get that
    \[
    \Se(\alpha_0)\le\|v\|_{W^{1,p}(\Omega_1)}^p\le\Se(\alpha_0).
    \]
    Therefore $v$ is an extremal for $\Se(\alpha_0)$ and, by
    \eqref{contSS1}, we have
    \begin{equation}
        \lim_{j\to+\infty}\|v_{\alpha_j}\|_{W^{1,p}(\Omega_1)}=
    \|v\|_{W^{1,p}(\Omega_1)}.
        \label{casiter1}
    \end{equation}
    Moreover, using \eqref{TT1ccon1} and \eqref{casiter1}, we can
    conclude that
    \[
    \lim_{j\to+\infty}v_{\alpha_j} = v \quad\mbox{strongly in }
    W^{1,p}(\Omega_1).
    \]

    \medskip

    \noindent {\em Step 3}. Lastly, we prove that \eqref{conmed}
    holds.

    First, we prove that $\HH^n(A)=\alpha_0\HH^n(\Omega_1).$ On the
    contrary, suppose that $\HH^n(A)>\alpha_0\HH^n(\Omega_1),$ then
    there exists $j_0$ such that
    $\HH^n(A)>\alpha_j\HH^n(\Omega_1)$ for all $j\ge
    j_0$ and therefore
    \[
    \Se(\alpha_0)=\|v\|_{W^{1,p}(\Omega_1)}^p>\Se(\alpha_j)>\Se(\alpha_0)
    \]
    and we obtain a contradiction. Thus
    $\HH^n(A)=\alpha_0\HH^n(\Omega_1)$ and by \eqref{desmed}
    \[
    \lim_{j\to+\infty}\HH^n(A_j)=\HH^n(A).
    \]
    Then, by \eqref{TT1ccon4} and Lemma \ref{laux0}, we have that
    \[
    \lim_{j\to+\infty}\HH^n(A_j\Delta A)=0.
    \]
    This finishes the proof.
\end{proof}

We arrive now at the main result of this section.

\begin{te}
Let $0<\alpha,\mu<1,$ $1\le p<n,$ and $1\le q< p^*,$ then
\[
\lim_{\mu\to0^+}\frac{\S_\mu(\alpha)}{\mu^{\frac{k(q-p)+p}{q}}}= \frac{\HH^k(\Omega_2)} {\HH^{k-1}(\partial\Omega_2)^{\frac{p}{q}}} \Se(\alpha).
\]
\end{te}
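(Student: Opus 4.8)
The plan is to establish the limit by proving a matching upper and lower bound for $\S_\mu(\alpha)/\mu^{(k(q-p)+p)/q}$ as $\mu\to 0^+$, using the change of variables $v_\mu(x,y)=u_\mu(x,\mu y)$ from Lemma \ref{LUVM} together with the rescaling lemmas, Lemma \ref{laux0} and Lemma \ref{laux1}. The first step is to compute how the Rayleigh quotient $S(\Gamma)$ transforms under $T_\mu$. Writing $\rd z = \mu^k \rd x\,\rd y$ and using \eqref{mborde}, for $u_\mu\in W^{1,p}(\Omega_\mu)$ with $v_\mu$ as above one gets
\[
\int_{\Omega_\mu}|\nabla u_\mu|^p + |u_\mu|^p\,\rd z = \mu^k\!\int_\Omega |\nabla_x v_\mu|^p + \mu^{-p}|\nabla_y v_\mu|^p\cdots\,\rd x\,\rd y,
\]
(keeping track of the $\mu^{-p}$ factor on the $y$-derivatives and the absence of rescaling on the $x$-derivatives), while the boundary term splits, via $\partial\Omega_\mu = \overline\Omega_1\times\mu\partial\Omega_2 \cup \partial\Omega_1\times\mu\overline\Omega_2$, into a term of order $\mu^{k-1}$ over $\Omega_1\times\partial\Omega_2$ and a term of order $\mu^k$ over $\partial\Omega_1\times\Omega_2$. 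Dividing numerator and denominator and extracting the leading powers of $\mu$ is where the exponent $k(q-p)+p)/q$ comes from.

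For the \textbf{upper bound}, I would take a nonnegative normalized extremal $v_{\alpha}$ for $\Se(\alpha)$ on $\Omega_1$ (it exists and $\HH^n(\{v_\alpha=0\})=\alpha\HH^n(\Omega_1)$ by Remark \ref{aux11}), regard it as a function on $\Omega=\Omega_1\times\Omega_2$ independent of $y$, then transplant it to $\Omega_\mu$ via $u_\mu(x,\zeta)=v_\alpha(x)$. Since $v_\alpha$ does not depend on $y$, the $\mu^{-p}|\nabla_y v_\mu|^p$ term vanishes, and the zero set $\{u_\mu=0\}\cap\partial\Omega_\mu$ has the right measure relative to $\H(\partial\Omega_\mu)$ by Lemma \ref{LUVM} (the $\mu^{k-1}$ term dominating). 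Plugging into the transformed Rayleigh quotient and letting $\mu\to 0^+$, the dominant contribution in the numerator is $\mu^k\int_{\Omega_1}|\nabla v_\alpha|^p+|v_\alpha|^p\,\rd x \cdot \HH^k(\Omega_2)$ and in the denominator $\big(\mu^{k-1}\HH^{k-1}(\partial\Omega_2)\int_{\Omega_1}|v_\alpha|^q\rd x\big)^{p/q}$, which after collecting powers gives exactly $\mu^{(k(q-p)+p)/q}\,\HH^k(\Omega_2)\,\HH^{k-1}(\partial\Omega_2)^{-p/q}\,\Se(\alpha)$ plus lower-order terms; hence $\limsup_{\mu\to0^+}\S_\mu(\alpha)/\mu^{(k(q-p)+p)/q}\le \HH^k(\Omega_2)\HH^{k-1}(\partial\Omega_2)^{-p/q}\Se(\alpha)$.

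For the \textbf{lower bound}, take for each $\mu$ a nonnegative normalized extremal $u_\mu$ of $\S_\mu(\alpha)$ (using the characterization with $\H(\{u_\mu=0\})\ge\alpha\H(\partial\Omega_\mu)$) and its rescaling $v_\mu$. The rescaled energies $\mu^{-k}\int_{\Omega_\mu}|\nabla u_\mu|^p\rd z$ — properly normalized — are bounded, so a subsequence of $\{v_\mu\}$ converges weakly in $W^{1,p}(\Omega)$; the $\mu^{-p}|\nabla_y v_\mu|^p$ term forces the limit $v$ to be independent of $y$, so $v\in W^{1,p}(\Omega_1)$. One then shows $\HH^n(\{v=0\}\cap\Omega_1)\ge\alpha\HH^n(\Omega_1)$: this is the delicate point. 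The boundary zero-set constraint on $u_\mu$ translates, via Lemma \ref{LUVM}, into a constraint predominantly on $\{v_\mu=0\}\cap(\Omega_1\times\partial\Omega_2)$; using the trace/compactness passing to the limit together with Lemma \ref{laux0} (applied to the fibered measure), one deduces a lower bound on $\HH^n(\{v=0\})$ and feeds it into the definition of $\Se$, possibly at a slightly smaller level $\alpha'<\alpha$ and then invoking the right-continuity $\Se(\alpha')\to\Se(\alpha)$ from Lemma \ref{laux1}. Lower semicontinuity of the transformed Rayleigh quotient under weak convergence then yields $\liminf_{\mu\to0^+}\S_\mu(\alpha)/\mu^{(k(q-p)+p)/q}\ge \HH^k(\Omega_2)\HH^{k-1}(\partial\Omega_2)^{-p/q}\Se(\alpha)$, completing the proof.

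\textbf{Main obstacle.} The hard part is the lower bound, specifically controlling the measure of the zero set of the limit function: the constraint on $u_\mu$ lives on the thin boundary layer $\Omega_1\times\mu\partial\Omega_2$, and one must show that as $\mu\to0^+$ this concentrates correctly onto a bulk constraint $\HH^n(\{v=0\})\ge\alpha\HH^n(\Omega_1)$ for the $y$-independent limit $v$, with no loss of mass. This is precisely why Lemma \ref{laux1} (right-continuity of $\Se$) is needed — to absorb any small deficit $\alpha-\alpha'$ arising in this limiting argument — and why the $\mu^k$-order term over $\partial\Omega_1\times\Omega_2$ must be shown to be asymptotically negligible against the $\mu^{k-1}$-order term.
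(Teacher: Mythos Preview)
Your overall strategy matches the paper's, but you have the role of Lemma \ref{laux1} reversed, and this creates a genuine gap in your upper bound.

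In the \textbf{upper bound}, the test function $u_\mu(x,\zeta)=v_\alpha(x)$ built from an extremal of $\Se(\alpha)$ is \emph{not} admissible for $\S_\mu(\alpha)$. Its boundary zero set has measure (at least) $\mu^{k-1}\alpha\,\HH^n(\Omega_1)\,\HH^{k-1}(\partial\Omega_2)$, whereas the constraint demands
\[
\alpha\,\H(\partial\Omega_\mu)=\mu^{k-1}\alpha\,\HH^n(\Omega_1)\,\HH^{k-1}(\partial\Omega_2)+\mu^k\alpha\,\HH^{n-1}(\partial\Omega_1)\,\HH^k(\Omega_2),
\]
which is strictly larger for every $\mu>0$; saying ``the $\mu^{k-1}$ term dominates'' is not enough. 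The paper's fix is to replace $\alpha$ by $\alpha_\mu:=\alpha\big(1+\mu\,\HH^{n-1}(\partial\Omega_1)\HH^k(\Omega_2)/\HH^n(\Omega_1)\HH^{k-1}(\partial\Omega_2)\big)$ and use a test function coming from $\Se(\alpha_\mu)$; this is admissible, yields $\S_\mu(\alpha)/\mu^{(k(q-p)+p)/q}\le C\,\Se(\alpha_\mu)$, and \emph{then} the right-continuity $\Se(\alpha_\mu)\to\Se(\alpha)$ of Lemma \ref{laux1} closes the argument.

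Conversely, in the \textbf{lower bound} no deficit arises and Lemma \ref{laux1} is not invoked. Dividing $\H(\{u_{\mu_j}=0\})\ge\alpha\,\H(\partial\Omega_{\mu_j})$ by $\mu_j^{k-1}$ and sending $j\to\infty$ (the $\mu_j$-order terms vanish since the remaining factors are bounded) gives directly
\[
\alpha\,\HH^n(\Omega_1)\,\HH^{k-1}(\partial\Omega_2)\le\limsup_j\H\big(\{v_j=0\}\cap(\Omega_1\times\partial\Omega_2)\big)\le\H\big(\{v=0\}\cap(\Omega_1\times\partial\Omega_2)\big),
\]
the last step by $\H$-a.e.\ convergence on $\Omega_1\times\partial\Omega_2$; since $v=v(x)$ this is $\HH^n(\{v=0\}\cap\Omega_1)\,\HH^{k-1}(\partial\Omega_2)$, so $v$ is admissible for $\Se(\alpha)$ with no loss. (Note also that a deficit $\alpha'<\alpha$ would require \emph{left}-continuity of $\Se$, not right-continuity, and since $\Se$ is increasing that inequality would point the wrong way.) Lemma \ref{laux0} plays no direct role here; in the paper it is used only inside the proof of Lemma \ref{laux1}.
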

\begin{proof}
    We begin by proving
    \[
    \limsup_{\mu\to0^+}\frac{\S_\mu(\alpha)}{\mu^{\frac{k(q-p)+p}{q}}}\le
    \frac{\HH^k(\Omega_2)}{\HH^{k-1}(\partial\Omega_2)^{\frac{p}{q}}}\Se(\alpha).
    \]

Let
    \[
    \alpha_\mu=\alpha\left(1+\mu\frac{\HH^{n-1}
    (\partial\Omega_1)\HH^{k}(\Omega_2)}
    {\HH^n(\Omega_1)\HH^{k-1}(\partial\Omega_2)}\right).
    \]
and take $v\in W^{1,p}(\Omega_1)$ such that
    \[
    \mathcal{H}^{n}\left(A\right)\ge\alpha_\mu\mathcal{H}^n(\Omega_1),
    \]
    where
    \[
    A=\left\{x\in\Omega_1\colon v(x)=0\right\}.
    \]

Then, if we take $u(x,y)=v(x)$ for all $(x,y)\in\Omega_\mu,$ we have that
    \begin{align*}
        \H\left(\{w=0\}\cap\partial\Omega_\mu\right)&\ge
        \H\left(\{w=0\}\cap
        \left(\overline{\Omega}_1\times\mu\partial\Omega_2\right)
        \right)\\
        &\ge \H\left(A\times\mu\partial\Omega_2\right)\\
        &=\mu^{k-1}\HH^n(A)\HH^{k-1}(\partial\Omega_2)\\
        &\ge\mu^{k-1}\alpha_\mu
        \HH^n(\Omega_1)\HH^{k-1}(\partial\Omega_2)\\
        &=\alpha\H(\partial\Omega_\mu).
    \end{align*}
    Therefore, $u$ is an admissible function in the characterization
    of $\S_\mu(\alpha)$ (see Lemma \ref{L2}), then
    \begin{align*}
        \S_\mu(\alpha)&\le\frac{\iint_{\Omega_\mu}|\nabla
        w|^p+|w|^p\,\rd x\rd
        y}{\left(\int_{\partial\Omega_\mu}|w|^q\,\rd\H
        \right)^{\frac{p}{q}}}\\
        &=\frac{\mu^k\HH^k(\Omega_2)\int_{\Omega_1}|\nabla
        v|^p+|v|^p\,\rd
        x}{\left(\mu^{k-1}\HH^{k-1}(\partial\Omega_2)\int_{\Omega_1}|v|^q
        \rd x +
        \mu^k\HH^k(\Omega_2)\int_{\partial\Omega_1}|v|^q\,\rd
        \HH^{n-1}\right)^{\frac{p}{q}}}\\
        &\le
        \mu^{\frac{k(q-p)+p}{q}}\frac{\HH^k(\Omega_2)}{\HH^{k-1}
        (\partial\Omega_2)^{\frac{p}{q}}}\frac{\int_{\Omega_1}|\nabla
        v|^p+|v|^p\,\rd x}{\left(\int_{\Omega_1}|v|^q \rd
        x\right)^{\frac{p}{q}}}.
    \end{align*}
    Thus, taking infimum over all $v\in W^{1,p}(\Omega_1)$ such
    that
    \[
    \HH^n\left(\left\{x\in\Omega_1\colon v(x)=0
    \right\}\right)\ge\alpha_\mu\HH^n(\Omega_1),
    \]
    we get that
    \[
    \frac{\S_\mu(\alpha)}{\mu^{\frac{k(q-p)+p}{q}}}\le \frac{\HH^k(\Omega_2)}{\HH^{k-1}
        (\partial\Omega_2)^{\frac{p}{q}}}
    \Se(\alpha_\mu).
    \]
    Therefore, using Lemma \ref{laux1},
    \begin{equation}
        \limsup_{\mu\to 0^+}
        \frac{\S_\mu(\alpha)}{\mu^{\frac{k(q-p)+p}{q}}}\le \frac{\HH^k(\Omega_2)}{\HH^{k-1}
        (\partial\Omega_2)^{\frac{p}{q}}}
    \Se(\alpha).
        \label{limsup}
    \end{equation}

    On the other hand, for each $\mu$ there exist there exists an
    extremal $u_\mu\in W^{1,p}(\Omega_\mu)$ of $\S_\mu(\alpha)$ such that
    \begin{equation}
        \iint_{\Omega_1\times\partial\Omega_2}|v_\mu|^q \rd x\rd
        \HH^{k-1} + \mu\iint_{\partial\Omega_1\times\Omega_2}
        |v_\mu|^q \rd \HH^{n-1}\rd y = 1,
        \label{enorm}
    \end{equation}
    where $v_\mu(x,y)=u_\mu(x,\mu y).$

    Then,
    \begin{align*}
        \S_\mu(\alpha)&=\frac{\iint_{\Omega_\mu}|\nabla
        u_\mu|^p+|u_\mu|^p \,\rd x\rd
        y}{\left(\int_{\partial\Omega_\mu} |u_\mu|^q \, \rd
        \H\right)^{p/q}}\\
        &=\frac{\iint_{\Omega}\left(|(\nabla_x v_\mu,
        \mu^{-1}\nabla_y v_\mu)|^p+|v_\mu|^p\right)\mu^k \,\rd x\rd
        y}{\left(\mu^{k-1}
        \iint_{\Omega_1\times\partial\Omega_2}|v_\mu|^q \rd x\rd
        \HH^{k-1} + \mu^k\iint_{\partial\Omega_1\times\Omega_2}
        |v_\mu|^q \rd \HH^{n-1}\rd y\right)^{\frac{p}{q}}}\\
        &=\mu^{\frac{k(q-p)+p}{q}}\left(
        \iint_{\Omega}|(\nabla_x v_\mu,
        \mu^{-1}\nabla_y v_\mu)|^p+|v_\mu|^p\,\rd x\rd
        y\right).
    \end{align*}
    Thus,
    \begin{equation}
        \frac{\S_\mu(\alpha)}{\mu^{\frac{k(q-p)+p}{q}}}=
    \iint_{\Omega}|(\nabla_x v_\mu,
        \mu^{-1}\nabla_y v_\mu)|^p+|v_\mu|^p\,\rd x\rd
        y \quad \forall\mu\in(0,1).
        \label{pdes}
    \end{equation}
    Let $\{\mu_j\}_{j\in\mathbb{N}}$ be a sequence such
    that $\mu_j\to 0^+$ as $j\to\infty$ and
    \[
         \liminf_{\mu\to 0^+}\frac{\S_\mu(\alpha)}{\mu^{\frac{k(q-p)+p}{q}}}
     =\lim_{j\to+\infty}
     \frac{\S_{\mu_j}(\alpha)}{\mu_j^{\frac{k(q-p)+p}{q}}}.
    \]

To simplify the notation, we write $v_j$ instead of $v_{\mu_j}$ for all $j\in\mathbb{N}.$

Then, by \eqref{limsup}, we have that $\{v_j\}_{j\in\mathbb{N}}$ is bounded in $W^{1,p}(\Omega).$ Therefore, there exists a function 
$v\in W^{1,p}(\Omega)$ and a subsequence of $\{v_j\}_{j\in\mathbb{N}}$ (still denoted by $\{v_j\}_{j\in\mathbb{N}}$) such that
    \begin{eqnarray}
        \label{T1con11}
        v_j &\rightharpoonup& v \quad \textrm{ weakly in }
        W^{1,p}(\Omega),\\
        \label{T1con22} v_j &\to& v \quad \textrm{ strongly in } L^p(\Omega),\\
        \label{T1con33} v_j &\to& v \quad \textrm{ strongly in }
        L^q(\partial\Omega).
    \end{eqnarray}
   Observe that, by
    \eqref{T1con33}, we have that
     \begin{eqnarray}
        \label{T1con44} v_j &\to& v \quad \textrm{ strongly in }
    L^q(\partial\Omega_1\times\Omega_2),\\
        \label{T1con55} v_j &\to& v \quad \textrm{ strongly in }
        L^q(\Omega_1\times\partial\Omega_2),
    \end{eqnarray}
    and, using \eqref{enorm}, \eqref{T1con44} and \eqref{T1con55}, we
    get
    \[
    \iint_{\Omega_1\times\partial\Omega_2}|v|^q\, \rd x\rd\HH^{k-1}=1,
    \]
    from where we conclude that $v\not\equiv0.$

    Now, using again \eqref{limsup} and \eqref{pdes}, we have that
    there exists a constant $C$ such that
    \[
    \iint_{\Omega} |\mu_j^{-1}\nabla_y v_j|^p\,\rd x\rd y\le C
    \quad\forall j\in\mathbb{N},
    \]
    then $\{\mu_j^{-1} \nabla_y v_j\}_{j\in\mathbb{N}}$ is bounded in
    $L^p(\Omega)$
    and
     \[
    \iint_{\Omega} |\nabla_y v_j|^p\,\rd x\rd y\le C\mu_j^p
    \to0 \mbox{ as } j\to\infty.
    \]
    Therefore $v$ does not depend on $y,$ i.e. $v=v(x)$ and
    \begin{equation}
       1=\iint_{\Omega_1\times\partial\Omega_2}|v|^q\,
       \rd x\rd\HH^{k-1}=\HH^{k-1}(\partial\Omega_2)\int_{\Omega_1}
       |v|^q \rd x.
        \label{eaux2}
    \end{equation}

    On the other hand, using that $\{\mu_j^{-1} \nabla_y
    v_j\}_{j\in\mathbb{N}}$ is bounded in $L^p(\Omega),$ there exist
    $w\in L^p(\Omega)$ such that
    \[
    \mu_j^{-1}\nabla_y v_j \rightharpoonup w \quad \textrm{ weakly in }
        L^{p}(\Omega).
    \]
    Then
    \begin{align*}
     \liminf_{\mu\to 0^+}\frac{\S_\mu(\alpha)}{\mu^{\frac{k(q-p)+p}{q}}}
     &=\lim_{j\to+\infty}
     \frac{\S_{\mu_j}(\alpha)}{\mu_j^{\frac{k(q-p)+p}{q}}}\\
     &=\lim_{j\to+\infty}\iint_{\Omega}|(\nabla_x v_j,
        \mu_j^{-1}\nabla_y v_j)|^p+|v_j|^p\,\rd x\rd
        y\\
        &\ge \iint_{\Omega}|(\nabla_x v,w)|^p+|v|^p\,\rd x\rd
        y\\
        &\ge \HH^{k}(\Omega_2)\|v\|_{W^{1,p}(\Omega_1)}^p,
    \end{align*}
    and, by \eqref{eaux2}, we get
    \begin{equation}
        \liminf_{\mu\to 0^+}\frac{\S_\mu(\alpha)}{\mu^{\frac{k(q-p)+p}{q}}}
        \ge \frac{\HH^{k}(\Omega_2)}
        {\HH^{k-1}(\partial\Omega_2)^{\frac{p}{q}}}\frac{
        \|v\|_{W^{1,p}(\Omega_1)}^p}{\|v\|_{L^q(\Omega_1)}^{\frac{p}{q}}}.
        \label{eaux3}
    \end{equation}

Lastly, by \eqref{mborde},  Lemma \ref{LUVM} and since $u_{\mu_j}$ is an extremal for $\S_{\mu_j}(\alpha)$ for all $j\in\mathbb{N}$, we have that
\begin{align*}
\alpha\HH^n(\Omega_1)\HH^{k-1}(\partial\Omega_2)\le&
\H(\{v_j=0\}\cap(\Omega_1\times\partial\Omega_2))\\
&+\mu_j \H(\{v_j=0\}\cap(\partial\Omega_1\times\Omega_2))
\end{align*}
for all $j\in\mathbb{N}$.
Then, using \eqref{T1con55},  we get that
    \begin{align*}
    \alpha\HH^n(\Omega_1)\HH^{k-1}(\partial\Omega_2)
    &\le\limsup_{j\to+\infty}\H(\{v_j=0\}\cap(\Omega_1\times\partial\Omega_2))
    \\
    &\le\H(\{v=0\}\cap(\Omega_1\times\partial\Omega_2))\\
    &=\H\left( (\{v=0\}\cap\Omega_1)\times\partial\Omega_2
    \right)\\
    &=\HH^n((\{v=0\}\cap\Omega_1)\HH^{k-1}(\partial\Omega_2).
    \end{align*}
    Thus,
    \[
    \alpha\HH^n(\Omega_1)\le\HH^n(\{v=0\}\cap\Omega_1),
    \]
    and $v$ is an admissible function in the characterization of
    $\Se(\alpha).$ Then, using \eqref{limsup} and \eqref{eaux3}, we
    have that
    \[
    \frac{\HH^{k}(\Omega_2)}
        {\HH^{k-1}(\partial\Omega_2)^{\frac{p}{q}}}\Se(\alpha)\le
        \liminf_{\mu\to 0^+}\frac{\S_\mu(\alpha)}{\mu^{\frac{k(q-p)+p}{q}}}
        \le\limsup_{\mu\to
        0^+}\frac{\S_\mu(\alpha)}{\mu^{\frac{k(q-p)+p}{q}}}\le
        \frac{\HH^{k}(\Omega_2)}
        {\HH^{k-1}(\partial\Omega_2)^{\frac{p}{q}}}\Se(\alpha).
    \]
    The proof is now complete.
\end{proof}

\bigskip

\subsection{The case $n=1$}

When the limit problem is one-dimensional we can give a more precise description of the situation. So in this subsection we consider the case $\Omega_1 = (a,b)\subset \mathbb{R}$, an interval.

In \cite{FBRS} the following Theorem regarding the limit problem for $n=1$ is proved
\begin{te}[\cite{FBRS}, Theorem 1.2]\label{optimalhole}
The optimal limit constant $\Se(\alpha)$ is attained only for
a hole $A^* = (a, a+ \alpha (b-a))$ or $A^* = (b - \alpha (b-a),
b)$, that is the best hole is an interval concentrated on one
side of the interval $(a,b)$. Moreover, the optimal limit constant
is given by
$$
\Se(\alpha) = \frac{(2\pi)^p (p-1)}
{(2\alpha(b-a)p\sin{(\frac{\pi}{p})})^p} + 1.
$$
\end{te}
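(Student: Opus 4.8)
The plan is to reduce, by one--dimensional symmetrization, an arbitrary extremal to a monotone function and then to integrate the resulting ordinary differential equation explicitly. Write $I=(a,b)$ and $\ell_0:=(1-\alpha)(b-a)$. By Remark \ref{aux11} (which applies verbatim with $\Omega_1=I$) the infimum defining $\Se(\alpha)$ is attained; replacing $v$ by $|v|$ we may take the extremal $v\ge0$, and since $\Se$ is strictly increasing in $\alpha$ (again Remark \ref{aux11}) the constraint is active, so $\HH^1(\{v=0\})=\alpha(b-a)$ and the support $\{v>0\}$ has length $\ell_0$. Now let $v^{\ast}$ be the decreasing rearrangement of $v$ on $I$; then $\|v^{\ast}\|_{L^q(I)}=\|v\|_{L^q(I)}$, $\|v^{\ast}\|_{L^p(I)}=\|v\|_{L^p(I)}$, $\HH^1(\{v^{\ast}=0\})=\HH^1(\{v=0\})$, and $\int_I|(v^{\ast})'|^p\,\rd x\le\int_I|v'|^p\,\rd x$ (the one--dimensional P\'olya--Szeg\H o inequality, valid for $p\ge1$). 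Hence $v^{\ast}$ is again an extremal, and being non--increasing with $\HH^1(\{v^{\ast}>0\})=\ell_0$ it is supported exactly on $(a,b-\alpha(b-a))$. This already exhibits $(b-\alpha(b-a),b)$ as an optimal hole, and the reflection $x\mapsto a+b-x$ gives $(a,a+\alpha(b-a))$ as well.

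For the ``only for'' part I would argue that \emph{every} extremal must realise equality in the P\'olya--Szeg\H o inequality: otherwise $v^{\ast}$ would have strictly smaller $W^{1,p}$ norm for the same $L^q$ norm, contradicting minimality. The equality case in one dimension, via the coarea identity $\int_I|v'|^p\,\rd x=\int_{\R}\big(\sum_{v(x)=t}|v'(x)|^{p-1}\big)\,\rd t$ together with the power--mean inequality $\sum_i g_i^{p-1}\ge(\sum_i g_i^{-1})^{1-p}$, strict for $p>1$ unless the sum has a single term, forces $\{v=t\}$ to be a single point for a.e.\ level $t$; since $v$ is continuous, $v\ge0$ and $\HH^1(\{v=0\})>0$, this means $v$ is monotone on $I$ and its support is an interval abutting $a$ or $b$. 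On that interval $v$ solves $-(|v'|^{p-2}v')'+|v|^{p-2}v=\Se(\alpha)\,|v|^{q-2}v$ with a Dirichlet condition at the interior endpoint and the natural condition $|v'|^{p-2}v'=0$ at the endpoint $a$ (resp.\ $b$); uniqueness up to scaling of the positive solution of this boundary value problem (standard for the one--dimensional $p$--Laplacian) then determines both the optimal hole and the extremal, so the only optimal holes are the two stated intervals.

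To compute the value, reflect the positive extremal on $(a,b-\alpha(b-a))$ evenly across the natural--condition endpoint: one obtains a function in $W^{1,p}_0$ of an interval of length $2\ell_0=2(1-\alpha)(b-a)$ which is the first positive (hence, when $q=p$, first eigen-) solution of the Dirichlet problem for $-(|w'|^{p-2}w')'+|w|^{p-2}w=\mu|w|^{q-2}w$. The equation is autonomous, so multiplying by $w'$ and integrating once reduces it to a quadrature whose primitive is a generalized trigonometric ($\sin_p$) function; in the model case $q=p$ the equation becomes $-(|w'|^{p-2}w')'=(\mu-1)|w|^{p-2}w$, whose first Dirichlet eigenvalue on an interval of length $2\ell_0$ equals $(p-1)\big(\pi_p/(2\ell_0)\big)^p+1$ with $\pi_p=2\pi/(p\sin(\pi/p))$; inserting the value of $\ell_0$ and simplifying yields exactly the expression for $\Se(\alpha)$ in the statement (for general $q$ the same quadrature produces the corresponding closed form).

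The step I expect to be the main obstacle is the equality analysis for the P\'olya--Szeg\H o inequality in the Sobolev (rather than Lipschitz) class: making the coarea/indicatrix computation rigorous enough to conclude that \emph{every} extremal is monotone, and hence to upgrade existence of an optimal hole of the stated form to uniqueness. The explicit integration of the one--dimensional $p$--Laplace equation with the zeroth--order term, while classical, is the other slightly delicate point.
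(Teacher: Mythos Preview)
This theorem is not proved in the present paper: it is quoted from \cite{FBRS} (their Theorem~1.2) and used only as a black box for the corollary that follows. There is therefore no proof here to compare your proposal against.

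Your strategy---decreasing rearrangement on the interval to produce a monotone competitor, equality analysis in the one--dimensional P\'olya--Szeg\H o inequality for uniqueness, reflection across the Neumann endpoint to obtain a Dirichlet problem on an interval of length $2\ell_0$, and explicit integration via the $p$--sine---is the natural route and is sound in outline. One concrete remark: carrying your own computation through with $\ell_0=(1-\alpha)(b-a)$ yields
\[
\Se(\alpha)=\frac{(2\pi)^p(p-1)}{\bigl(2(1-\alpha)(b-a)\,p\sin(\pi/p)\bigr)^p}+1,
\]
with $1-\alpha$ where the displayed statement has $\alpha$. Your version is the one consistent with the definitions in the paper (the support of the extremal has length $(1-\alpha)(b-a)$, and $\Se(\alpha)$ must diverge as $\alpha\to1^-$, which the displayed formula does not), so the discrepancy appears to be a transcription slip in the quoted statement rather than an error on your side. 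The step you flag as the main obstacle---equality in P\'olya--Szeg\H o forcing monotonicity of a Sobolev extremal---is genuine but mild in one dimension once you know (from the Euler--Lagrange ODE) that the extremal is $C^1$ and has $v'\neq0$ a.e.\ on its support; the standard Brothers--Ziemer criterion then gives the conclusion.
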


As a consequence of this Theorem, we have the following Corollary on the approximate shape and location of optimal boundary holes

\begin{co}
For $\mu$ small enough the best boundary hole $\Gamma_\mu$ for the domain
$\Omega_\mu = (a,b)\times\mu\Omega_2$ with measure $\H(\Gamma_\mu) = \alpha
\H(\partial\Omega_\mu)$ looks like $\Gamma_\mu \simeq (a, a+ \alpha (b-a)) \times \partial\mu
\Omega_2$ or like $\Gamma_\mu \simeq (b - \alpha (b-a), b) \times \partial\mu \Omega_2$.

\end{co}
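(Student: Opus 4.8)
The plan is to funnel the compactness contained in the proof of the preceding theorem through Theorem \ref{optimalhole}. Fix $\alpha\in(0,1)$. Given an optimal boundary hole $\Gamma_\mu$ for $\Omega_\mu$, let $u_\mu\in W^{1,p}(\Omega_\mu)$ be an extremal of $S(\Gamma_\mu)=\S_\mu(\alpha)$, so that $\Gamma_\mu\subseteq\{u_\mu=0\}\cap\partial\Omega_\mu$ and $\H(\{u_\mu=0\}\cap\partial\Omega_\mu)\ge\alpha\H(\partial\Omega_\mu)$; normalize $u_\mu$ as in \eqref{enorm} and set $v_\mu(x,y)=u_\mu(x,\mu y)$. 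Exactly as in the proof of the preceding theorem, every sequence $\mu_j\downarrow 0$ has a subsequence (not relabeled) along which $v_{\mu_j}\rightharpoonup v$ weakly in $W^{1,p}(\Omega)$, $v_{\mu_j}\to v$ strongly in $L^p(\Omega)$ and in $L^q(\partial\Omega)$, $v=v(x)$ does not depend on $y$, and $v$ is an extremal of $\Se(\alpha)$.

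The first step is to identify $v$ by means of the one–dimensional description. Since $\Se(\cdot)$ is strictly increasing (Remark \ref{aux11}), any extremal $v$ of $\Se(\alpha)$ must satisfy $\HH^1(\{v=0\}\cap\Omega_1)=\alpha(b-a)$: if this measure were $\beta(b-a)$ with $\beta>\alpha$, then $v$ would be admissible for $\Se(\beta)$ and yield $\Se(\beta)\le\|v\|^p_{W^{1,p}(\Omega_1)}/\|v\|^p_{L^q(\Omega_1)}=\Se(\alpha)<\Se(\beta)$, a contradiction. By Theorem \ref{optimalhole} applied with $\Omega_1=(a,b)$, the nodal set $\{v=0\}\cap\Omega_1$ therefore agrees, up to an $\HH^1$-null set, with one of the two intervals $A^*=(a,a+\alpha(b-a))$ or $A^*=(b-\alpha(b-a),b)$.

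The second step promotes this to convergence of the nodal sets on the dominant boundary component $\Omega_1\times\partial\Omega_2$. Set $Z_j=\{v_{\mu_j}=0\}\cap(\Omega_1\times\partial\Omega_2)$ and $Z=A^*\times\partial\Omega_2$. The chain of inequalities at the end of the proof of the preceding theorem already gives $\limsup_j\H(Z_j)\le\H(\{v=0\}\cap(\Omega_1\times\partial\Omega_2))=\H(Z)$ (and it is also the source of the bound $\HH^1(\{v=0\}\cap\Omega_1)\ge\alpha(b-a)$ used above). For the reverse inequality, Lemma \ref{LUVM} together with $\H(\{u_{\mu_j}=0\}\cap\partial\Omega_{\mu_j})\ge\alpha\H(\partial\Omega_{\mu_j})$ and \eqref{mborde}, after dividing by $\mu_j^{k-1}$ and using that $\H(\{v_{\mu_j}=0\}\cap(\partial\Omega_1\times\Omega_2))$ stays bounded, gives $\liminf_j\H(Z_j)\ge\alpha(b-a)\,\HH^{k-1}(\partial\Omega_2)=\H(Z)$. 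Hence $\H(Z_j)\to\H(Z)$, and since $v_{\mu_j}\to v$ $\H$-a.e.\ on $\Omega_1\times\partial\Omega_2$ by \eqref{T1con55}, Lemma \ref{laux0} (applied on $\Omega_1\times\partial\Omega_2$ with its product measure) yields $\H(Z_j\,\Delta\,Z)\to 0$. This finally transfers to $\Gamma_{\mu_j}$ itself: since $\Gamma_{\mu_j}\subseteq\{u_{\mu_j}=0\}\cap\partial\Omega_{\mu_j}$ has exact measure $\alpha\H(\partial\Omega_{\mu_j})$ and its part in $\partial\Omega_1\times\mu_j\overline{\Omega}_2$ has measure $O(\mu_j^k)=o(\H(\partial\Omega_{\mu_j}))$, applying $T_{\mu_j}^{-1}$ and discarding a set of relative measure $o(1)$ carries $\Gamma_{\mu_j}$ onto a subset of $Z_j$ of relative measure $1-o(1)$, which by the above lies within $o(1)$ in symmetric difference of $A^*\times\partial\Omega_2$; rescaling, $\Gamma_{\mu_j}\simeq A^*\times\mu_j\partial\Omega_2$. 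Because every subsequence of $\mu\downarrow 0$ has a further subsequence with this property and there are only the two candidate limits $A^*$, the asserted approximate shape holds for all sufficiently small $\mu$.

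The main obstacle will be the second step: converting the weak-type convergence $v_{\mu_j}\rightharpoonup v$ of the rescaled extremals into genuine convergence of their nodal sets. This rests on squeezing $\H(Z_j)$ between the $\limsup$ bound inherited from the proof of the dimension-reduction theorem and the $\liminf$ bound coming from the volume constraint, which in turn demands careful bookkeeping of the powers of $\mu$ in \eqref{mborde} so that the lower-order boundary component $\partial\Omega_1\times\Omega_2$ is genuinely negligible; once the measure convergence $\H(Z_j)\to\H(Z)$ is secured, Lemma \ref{laux0} does the rest. A secondary, mostly cosmetic point is that the conclusion is only the approximate statement $\Gamma_\mu\simeq A^*\times\mu\partial\Omega_2$: one must be content with convergence in symmetric difference along subsequences, and with the dichotomy ``left interval or right interval'', since different scales $\mu$ may a priori favour different sides.
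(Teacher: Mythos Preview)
The paper does not supply a proof of this corollary; it is stated informally (note the undefined symbol $\simeq$ and the phrase ``looks like'') as a heuristic consequence of the dimension-reduction theorem together with Theorem~\ref{optimalhole}. Your argument is therefore not a competitor to the paper's proof but a rigorous justification of a claim the authors leave to the reader, and it is essentially correct: you extract the compactness already present in the proof of the preceding theorem, identify the limit profile $v$ via Theorem~\ref{optimalhole} and the strict monotonicity of $\Se(\cdot)$, and then upgrade weak convergence to convergence of nodal sets by squeezing $\H(Z_j)$ between the $\limsup$ bound (a.e.\ convergence) and the $\liminf$ bound (the measure constraint and \eqref{mborde}), after which Lemma~\ref{laux0} applies. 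The closing subsequence argument is also sound: since every sequence has a further subsequence converging to one of only two possible limits, a contradiction argument yields the conclusion for all small $\mu$, with the understanding that the side (left or right interval) may depend on $\mu$.

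Two small remarks. First, Lemma~\ref{laux0} is stated for Euclidean domains with Lebesgue measure, while you invoke it on $\Omega_1\times\partial\Omega_2$ with the product Hausdorff measure; this is harmless since the lemma is purely measure-theoretic, but a sentence acknowledging the transfer would be appropriate. Second, your final paragraph is meta-commentary on the strategy rather than proof content and should be removed.
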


\subsection{General geometries} We finish this section by observing that, once the product case is studied, the extension of our results to more general domains $\Omega$ in $\rn$ than a product is done by a standard procedure. Cf. with \cite{FBMR, FBRS}.

So, in this case we let $\Omega_\mu=\{(x,\mu y)\colon (x,y)\in\Omega\}.$

We have the following
\begin{te}
Let $\Omega$ be a bounded and Lipschitz domain in $\rn.$ Let $\Omega_x$ be the $x-$section of $\Omega$ and $P(\Omega)$ be the projection of $\Omega$ onto de $x$ variable, i.e.
$$
\Omega_x := \{y\in\R^k\colon (x,y)\in\Omega\} \qquad \mbox{and}\qquad
P(\Omega):=\{x\in\R^n\colon \Omega_x\neq\emptyset\}.
$$

Then, if we call $\rho(x) = \HH^k(\Omega_x)$ and $\beta(x) = \HH^{k-1}(\partial\Omega_x)$ we have that
    \[
    \lim_{\mu\to0^+}\frac{S_\mu(\alpha)}{\mu^{\frac{k(q-p)+p}{q}}}
    =\Se(\alpha,\rho,\beta),
    \]
    where
    \[
    \Se(\alpha,\rho,\beta)=\inf\left\{\frac{\int_{P(\Omega)}(|\nabla v|^p
    + |v|^p)\,\rho(x)\rd x}{\left(\int_{P(\Omega)}|v|^q\beta(x)\rd
    x\right)^{\frac{p}{q}}}\colon v\in A(\alpha)\right\}
    \]
    with
    \[
    \mathcal{A}(\alpha)=\left\{v\in W^{1,p}(P(\Omega),\rho)\colon \HH^n(\{x\in
    P(\Omega)\colon v(x)=0\})\ge\alpha\HH^n(P(\Omega)\right\}.
    \]
    Here $W^{1,p}(P(\Omega),\rho)$ is the weighted Sobolev space,
    \[
    W^{1,p}(P(\Omega),\rho)=\left\{v\colon
    P(\Omega)\to\mathbb{R}\colon \int_{P(\Omega)}(|\nabla v|^p
    +|v|^p)\rho(x)\,\rd x<+\infty\right\}.
    \]
\end{te}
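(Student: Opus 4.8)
The plan is to adapt the proof of the product case, replacing the constant cross-sections $\Omega_2$ by the varying sections $\Omega_x$, which turns the limit constant into the weighted Sobolev quotient $\Se(\alpha,\rho,\beta)$. The rescaling $v_\mu(x,y) = u_\mu(x,\mu y)$ is still available, but now the domain of the rescaled functions is the fixed set $\Omega$ (no longer a product). The key bookkeeping identities become
\[
\H(\partial\Omega_\mu) = \mu^{k-1}\int_{P(\Omega)}\beta(x)\,\rd x + \mu^k\,(\text{lower order}),
\]
and, for $u=u(x,y)$ rescaled as above,
\[
\iint_{\Omega_\mu}|\nabla u_\mu|^p + |u_\mu|^p\,\rd x\rd y = \mu^k\iint_{\Omega}|(\nabla_x v_\mu,\mu^{-1}\nabla_y v_\mu)|^p+|v_\mu|^p\,\rd x\rd y,
\]
while the boundary term splits into the ``vertical'' part $\mu^{k-1}\iint_{\Omega}\chi_{\partial\Omega_x}\,|v_\mu|^q$ and the ``lateral'' part $\mu^k(\cdots)$. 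Dividing through by $\mu^{k(q-p)+p)/q}$ one obtains, exactly as in the product case, that
\[
\frac{\S_\mu(\alpha)}{\mu^{\frac{k(q-p)+p}{q}}} = \iint_{\Omega}|(\nabla_x v_\mu,\mu^{-1}\nabla_y v_\mu)|^p + |v_\mu|^p\,\rd x\rd y
\]
for an appropriately normalized extremal $u_\mu$, and the whole argument reduces to passing to the limit in this quotient.

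For the upper bound I would take any admissible $v\in\mathcal{A}(\alpha)$, extend it trivially to $\Omega$ by $u(x,y)=v(x)$, check (via the analogue of Lemma \ref{LUVM}, which extends verbatim since the computation is just Fubini over the sections $\Omega_x$) that $u$ is admissible for $\S_{\mu'}(\alpha)$ with a slightly inflated $\alpha' = \alpha(1+O(\mu))$, and compute the quotient; the integrals over the product are replaced by $\int_{P(\Omega)}(\cdots)\rho(x)\rd x$ and $\int_{P(\Omega)}(\cdots)\beta(x)\rd x$. Taking the infimum over $v$ gives $\S_\mu(\alpha)/\mu^{(\cdots)}\le \Se(\alpha_\mu,\rho,\beta)$, and one concludes with the right-continuity of $\alpha\mapsto\Se(\alpha,\rho,\beta)$, which is proved exactly as in Lemma \ref{laux1} (the cut-off construction $w_\varepsilon = \eta_\varepsilon v_{\alpha_0}$ and the Hölder estimate go through unchanged, using a density-one point of $P(\Omega)\setminus\{v_{\alpha_0}=0\}$). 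For the lower bound, take a sequence $\mu_j\to0^+$ realizing the $\liminf$, use the upper bound to see $\{v_{\mu_j}\}$ is bounded in $W^{1,p}(\Omega)$, extract weak $W^{1,p}$, strong $L^p(\Omega)$ and strong $L^q(\partial\Omega)$ limits $v$, and observe that $\iint_\Omega|\nabla_y v_{\mu_j}|^p \le C\mu_j^p\to 0$ forces $v=v(x)$. Weak lower semicontinuity then yields
\[
\liminf_{\mu\to0^+}\frac{\S_\mu(\alpha)}{\mu^{\frac{k(q-p)+p}{q}}} \ge \iint_\Omega |\nabla_x v|^p + |v|^p\,\rd x\rd y = \int_{P(\Omega)}(|\nabla v|^p+|v|^p)\rho(x)\,\rd x,
\]
while the trace convergence normalizes $\int_{P(\Omega)}|v|^q\beta(x)\rd x = 1$, and the measure constraint $\HH^n(\{v=0\}\cap P(\Omega))\ge\alpha\HH^n(P(\Omega))$ passes to the limit from $\H(\{v_{\mu_j}=0\}\cap(\cdots))$ via strong $L^q$ convergence on the vertical boundary, exactly as in the product proof. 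Hence $v\in\mathcal{A}(\alpha)$ and the $\liminf$ dominates $\Se(\alpha,\rho,\beta)$, closing the sandwich.

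The points that need genuine care, rather than mechanical translation, are the following. First, one must know that the weighted problem $\Se(\alpha,\rho,\beta)$ is actually attained and that $\alpha\mapsto\Se(\alpha,\rho,\beta)$ is right-continuous; since $\Omega$ is only Lipschitz, $\rho$ and $\beta$ are merely bounded measurable (with $\rho>0$ on the interior of $P(\Omega)$ and possibly degenerating at $\partial P(\Omega)$), so the weighted Sobolev space $W^{1,p}(P(\Omega),\rho)$ must be shown to have the compactness needed for direct minimization and for the cut-off argument — this is the main obstacle, and it is the place where some genuine hypothesis (e.g. that $\rho$ is bounded above and below away from the boundary, or that $P(\Omega)$ together with $\rho$ supports the relevant Rellich and trace embeddings) is implicitly being used; I would either invoke the corresponding statements from \cite{FBMR, FBRS} or impose the mild regularity under which they hold. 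Second, the Fubini-type slicing of surface measure $\H(\partial\Omega_\mu)$ into the vertical contribution $\mu^{k-1}\int_{P(\Omega)}\beta(x)\rd x$ plus a lower-order lateral term requires the coarea/disintegration formula for Hausdorff measure on the Lipschitz graph $\partial\Omega_\mu$; this is standard but should be cited carefully. Everything else — the algebraic identity for the rescaled quotient, the weak/strong convergences, the lower semicontinuity, and the $y$-independence of the limit — is identical to the product case and I would simply indicate ``arguing as in the proof of the previous theorem'' rather than repeat it.
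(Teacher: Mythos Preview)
Your proposal is correct and follows exactly the route the paper indicates: the paper's own proof consists of a single sentence asserting that ``the extension to general geometries is analog to Theorem 1.1 in \cite{FBMR}'' (and Theorem 1.3 in \cite{FBRS}) and then omits all details, so your sketch of the adaptation --- rescaling, Fubini over sections to produce the weights $\rho$ and $\beta$, upper bound via $y$-constant test functions plus right-continuity in $\alpha$, lower bound via compactness and $\nabla_y v_{\mu_j}\to 0$ --- is precisely what the paper is deferring to those references. Your identification of the two genuine technical points (attainment/right-continuity for the weighted problem when $\rho,\beta$ may degenerate near $\partial P(\Omega)$, and the coarea-type decomposition of $\H(\partial\Omega_\mu)$) is apt; these are handled in \cite{FBMR, FBRS} under the Lipschitz hypothesis, and the paper is implicitly relying on that.
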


    \begin{proof} Once the product case is studied, the extension to general geometries is analog to Theorem 1.1 in \cite{FBMR}. See also Theorem 1.3 in \cite{FBRS}. We omit the details.
    \end{proof}
    \bigskip
\section{Shape derivative} \label{SD}

In this section, we are interested in the computation of the derivative of the set function $S(\cdot)$ with respect to regular deformations of the set. The formula obtained in this way could the be used in the (numerical) computation of optimal boundary holes. This approach have been used with relevant success in similar problems. See \cite{BD, FBGR, HP, O} and references therein.

Since the domain of $S(\cdot)$ are sets contained at the boundary $\partial\Omega$ which is a manifold of codimension one, we must take deformations of sets, which stays in $\partial \Omega$.

\medskip

We begin describing  the kind of variations we are going to
consider. Let $V\colon \rn \to \rn$ be a Lipschitz field such that,
$V \cdot \nu =0$ on $\partial \Omega$, where $\nu$ is the outer unit normal vector to $\partial\Omega$, and
$$
  \spt( V)\subset \Omega_\delta := \{ x \in \rn \colon \dist(x,\partial \Omega) < \delta \}
$$
for some $\delta>0$ small, where $\spt(V)$ is the support of $V$.

Now, we consider the flow associated to the field $V$. Let
$\Phi\colon [0,\infty) \times \rn \to \rn$, satisfying

$$
  \frac{d}{dt} \Phi_t(x)= V\big(\Phi_t(x)\big),
  \quad \Phi_0(x)=  x,
$$
where $\Phi_t(\cdot) \equiv \Phi(t,\cdot)$.

It is not difficult to see that, for each $t$ fixed, $\Phi_t$ is a
diffeomorphism. Indeed, by construction of the flow, $\Phi_t$ is invertible
with inverse given by $\Phi_{-t}$.
In \cite{HP}, the following asymptotic formulas were proved
$$
  \begin{aligned}
  \Phi_t(x)&= x + t \, V(x) + o(t),\\
  D\Phi_t(x)&= Id + t \, DV(x) + o(t),\\
   D\Phi_t(x)^{-1}&= Id - t \, DV(x) + o(t),\\
  J\Phi_t(x)&= 1 + t \, \dive V(x) + o(t),\\
  J_\tau \Phi_t(x)&= 1 + t \, \dive_\tau V(x) + o(t),
\end{aligned}
$$
for all $x\in\rn,$ where $J\Phi_t$ is the Jacobian of the flow and $\dive_\tau$
denotes the tangential component of the divergence operator.

\medskip

So, given
$\Gamma\subset\partial\Omega,$ we are allowed to
define
\begin{equation}
\label{DG}
     \Gamma_t:= \Phi_t(\Gamma) \subset \partial \Omega,
\end{equation}
and
\begin{equation}
    s(t):=S(\Gamma_t).
    \label{st}
\end{equation}
Observe that $s(0)=S(\Gamma).$

\begin{ob}
By construction, the flow preserves the topology of the initial
domain. Therefore, if $\Gamma$ is a connected set, then $\Gamma_t$
will be also connected. In fact, this is one of the characteristic
of the {\em shape derivative}, opposite, for instance, to the {\em topological
derivative}, see \cite{Allaire, Barbarosie, Feijoo, Iguernane}, etc.
\end{ob}

\medskip

Our first result of this section shows that, $s(t)$ is continuous with respect to $t$ at $t=0.$

\begin{te}\label{cont}
    With the previous notation,
    \[
    \lim_{t\to 0^+} s(t) =S(\Gamma).
    \]
\end{te}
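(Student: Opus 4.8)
The plan is to establish continuity of $s(t)=S(\Gamma_t)$ at $t=0$ by proving two one-sided inequalities: $\limsup_{t\to 0^+} s(t)\le S(\Gamma)$ and $\liminf_{t\to 0^+} s(t)\ge S(\Gamma)$. For the first, I would take an extremal $u$ for $S(\Gamma)$, normalized with $\|u\|_{\lq}=1$ and $u\ge 0$, and transport it by the flow: set $u_t := u\circ\Phi_{-t}$, which vanishes $\H$-a.e. on $\Gamma_t=\Phi_t(\Gamma)$ since $u$ vanishes on $\Gamma$ and $\Phi_t$ is a bi-Lipschitz boundary diffeomorphism. Hence $u_t\in\X_{\Gamma_t}$ is admissible for $S(\Gamma_t)$, so $s(t)\le \|u_t\|_{\wp}^p/\|u_t\|_{\lq}^p$. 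Using the change-of-variables formula together with the asymptotic expansions from \cite{HP} recalled above — in particular $D\Phi_t^{-1}=\Id - t\,DV+o(t)$, $J\Phi_t = 1 + t\,\dive V + o(t)$, and $J_\tau\Phi_t = 1 + t\,\dive_\tau V + o(t)$ — one gets $\int_\Omega |\nabla u_t|^p + |u_t|^p\,\rd x \to \int_\Omega|\nabla u|^p+|u|^p\,\rd x$ and $\int_{\partial\Omega}|u_t|^q\,\rd\H \to \int_{\partial\Omega}|u|^q\,\rd\H = 1$ as $t\to 0^+$. Therefore $\limsup_{t\to 0^+}s(t)\le \|u\|_{\wp}^p = S(\Gamma)$.

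For the reverse inequality I would argue by a compactness/lower-semicontinuity argument similar to the proof of Theorem \ref{sls}. Pick a sequence $t_n\to 0^+$ realizing $\liminf_{t\to 0^+}s(t)$, and let $u_n\in\X_{\Gamma_{t_n}}$ be nonnegative normalized extremals for $S(\Gamma_{t_n})$, so $\|u_n\|_{\lq}=1$ and $\|u_n\|_{\wp}^p = s(t_n)$. The sequence is bounded in $\wp$, so up to a subsequence $u_n\rightharpoonup u$ weakly in $\wp$, $u_n\to u$ strongly in $L^p(\Omega)$ and in $\lq$, and $u_n\to u$ $\H$-a.e. on $\partial\Omega$. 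Then $u\ge 0$ and $\|u\|_{\lq}=1$, and by weak lower semicontinuity $\|u\|_{\wp}^p\le \liminf_n\|u_n\|_{\wp}^p = \liminf_{t\to 0^+}s(t)$. It remains to check that $u$ vanishes $\H$-a.e. on $\Gamma$, so that $u\in\X_\Gamma$ and $S(\Gamma)\le\|u\|_{\wp}^p$, which finishes the proof.

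The delicate point — the main obstacle — is precisely this last step: passing the constraint $u_n = 0$ on $\Gamma_{t_n}=\Phi_{t_n}(\Gamma)$ to the limit $u=0$ on $\Gamma$, since the holes themselves are moving. I would handle it by undoing the flow: writing $\tilde u_n := u_n\circ\Phi_{t_n}$, one has $\tilde u_n = 0$ $\H$-a.e.\ on $\Gamma$ for every $n$. Because $\Phi_{t_n}\to \Id$ uniformly with $D\Phi_{t_n}^{-1}\to\Id$, the map $u_n\mapsto\tilde u_n$ is a small perturbation of the identity on $\wp$, so $\tilde u_n\rightharpoonup u$ weakly in $\wp$ and $\tilde u_n\to u$ in $\lq$ as well; then $\int_\Gamma u\,\rd\H = \lim_n\int_\Gamma \tilde u_n\,\rd\H = 0$, and since $u\ge 0$ this forces $u=0$ $\H$-a.e.\ on $\Gamma$. (Alternatively, one can test the weak $L^\infty$ convergence $\chi_{\Gamma_{t_n}}\stackrel{*}{\rightharpoonup}\chi_\Gamma$, which holds because $\H(\Gamma_{t_n}\triangle\Gamma)\to 0$ by the bi-Lipschitz convergence $\Phi_{t_n}\to\Id$, against the strongly convergent sequence $u_n$ in $\lq$ exactly as in Theorem \ref{sls}.) Combining the two inequalities yields $\lim_{t\to 0^+}s(t)=S(\Gamma)$.
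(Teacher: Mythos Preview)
Your proof is correct and follows essentially the same route as the paper's: transport a test function for $S(\Gamma)$ by the flow to get the upper bound, and for the lower bound take normalized extremals $v_n$ of $s(t_n)$, extract a weak limit $u$ in $\wp$, and use $\Phi_{t_n}\to\Id$ together with the strong $L^q(\partial\Omega)$ convergence to pass the constraint $v_n|_{\Gamma_{t_n}}=0$ to $u|_\Gamma=0$. The only notable difference is that the paper keeps track of the full first-order expansions (the term $R(u)$ and the $\dive_\tau V$ contribution) in the upper-bound computation, which it reuses in the next theorem to compute $s'(0)$; for mere continuity your ``$\to$'' argument is enough.
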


\begin{proof}
Let $u\in\X_\Gamma$ and we consider $v=u\circ\Phi_t^{-1}\in     \X_{\Gamma_t}.$ By the change of  variables formula, we have
    \[
    \int_\Omega |v|^p \, \rd x =\int_\Omega |u|^p\, \rd x
        +t\int_\Omega |u|^p \,\rd x + o(t),
    \]
    and
    \[
    \int_\Omega |\nabla v|^p \, \rd x = \int_\Omega |\nabla u|^p \, \rd x +t
    \int_\Omega \left(|\nabla u|^p \dive V -p|\nabla u|^{p-2}\langle
    \nabla u, DV^T\nabla u^T\rangle\right) \rd x + o(t).
    \]
    Then,
    \[
    \int_\Omega|\nabla v|^p+|v|^p \, \rd x =
    \int_\Omega |\nabla u|^p+|u|^p \, \rd x + t R(u) + o(t),
    \]
    where
    \[
    R(u) =\int_\Omega \left(|u|^p+|\nabla u|^p\right) \dive V \rd x-
    p\int_\Omega|\nabla u|^{p-2}\langle \nabla u, DV^T\nabla u^T\rangle
    \rd x.
    \]

On the other hand, by the change of variables formula on manifolds, see \cite{HP}, we obtain
    \[
    \int_{\partial\Omega} |v|^q \, \rd \H=
    \int_{\partial\Omega} |u|^q \, \rd \H +t
    \int_{\partial\Omega} |u|^q\dive_\tau V \, \rd \H +o(t).
    \]

    Then,
\begin{equation}\label{des1}
    \begin{aligned}
        s(t)&\le \frac{\int_\Omega |\nabla v|^p+|v|^p \,
        \rd x}{\left(\int_{\partial\Omega} |v|^q \, \rd
        \H\right)^{\frac{p}{q}}}\\
        &=\frac{\int_\Omega |\nabla u|^p+|u|^p \, \rd x + t
        R(u) + o(t)}{\left(\int_{\partial\Omega} |u|^q\,\rd\H +
        t\int_{\partial\Omega} |u|^q\dive_\tau V \,
        \rd \H +o(t)\right)^{\frac{p}{q}}},
    \end{aligned}
\end{equation}
    and therefore
    \[
    \limsup_{t\to0^+}s(t)\le\frac{\int_\Omega |\nabla
    u|^p+|u|^p \,\rd x}{\left(\int_{\partial\Omega} |u|^q \, \rd
        \H\right)^{\frac{p}{q}}} \quad \forall u\in\X_\Gamma.
    \]
    Then
    \begin{equation}
          \limsup_{t\to0^+}s(t)\le S(\Gamma).
        \label{aux1}
    \end{equation}

    Now, let $\{t_n\}_{n\in\mathbb{N}}$ such that $t_n \to
    0^+$ as $n\to \infty$ and
    \begin{equation}
        \liminf_{t\to0^+}s(t)= \lim_{n\to\infty}s(t_n).
        \label{aux2}
    \end{equation}
        For each $n\in\mathbb{N},$ let $v_n$ be an positive normalized
    extremal of $s(t_n),$ i.e. $v_n\in \X_{\Gamma_{t_n}}$, $v_n>0$ in $\Omega,$
    $\|v_n\|_{\lq}=1$ and
    \begin{equation}
        s(t_n)=\int_\Omega |\nabla v_n|^p+|v_n|^p \,
        \rd x.
        \label{aux3}
    \end{equation}
    Using \eqref{aux1} and \eqref{aux2}, we have that
    $\{v_n\}_{n\in\mathbb{N}}$ is bounded in $\wp$ and therefore there exists
    $u\in\wp$ and some subsequence of $\{v_n\}_{n\in\mathbb{N}}$ (still denote
    $\{v_n\}_{n\in\mathbb{N}}$) such that
        \begin{eqnarray}
        \label{con00}v_n &\rightharpoonup& u, \quad \textrm{ weakly in }
        W^{1,p}(\Omega),\\
    \label{con11} v_n &\to& u, \quad
    \textrm{ strongly in } L^p(\Omega),\\
    \label{con22} v_n &\to& u, \quad \textrm{ strongly in }
        L^q(\partial\Omega).
        \end{eqnarray}
    Then, $u\ge0$ and $\|u\|_{\lq}=1$ and
    \[
    \liminf_{t\to0^+}s(t)\ge \int_\Omega |\nabla u|^p+|u|^p
    \, \rd x.
    \]
    On the other hand, since $\Phi_{-t}\to Id$ in the $C^1$
    topology when $t\to 0$ and using \eqref{con22}, we have
    \[
    \int_{\partial\Omega}u\chi_{\Gamma}\,\rd\H=0
    \]
    and therefore $u\in\X_{\Gamma}.$ Then, using \eqref{aux1}
    \[
    S(\Gamma)\le \int_\Omega |\nabla u|^p+|u|^p\, \rd x
    \le \liminf_{t\to0^+}s(t)
    \le\limsup_{t\to0^+}s(t)\le S(\Gamma).
    \]
    Thus,
    \[
    \lim_{t\to0^+}s(t)= S(\Gamma).
    \]
The proof is now completed.
\end{proof}

\begin{ob} \label{stcon}
    Observe that, in the above prove, we really have that $v_n\to
    u$ strongly in
    $\wp$ when $n\to\infty$ because $\|v_n\|_{\wp}\to\|u\|_{\wp}$
    when $n\to\infty$
    and by \eqref{con00}.
\end{ob}

Now we arrive at the main result of this section.

\begin{te}
If $\Gamma\subset\partial\Omega$ is a positive $\H-$measurable subset,
we have that $s(t)$ is differentiable at $t=0$ and
\begin{equation}\label{dev}
\frac{\rd s}{\rd t}(0)=-\frac{p}{q}S(\Gamma)\int_{\partial\Omega}|u|^q\dive_\tau V\,\rd\H+R(u),
\end{equation}
where
    \[
    R(u) =\int_\Omega \left(|u|^p+|\nabla u|^p\right) \dive V \rd x-
    p\int_\Omega|\nabla u|^{p-2}\langle \nabla u, DV^T\nabla u^T\rangle
    \rd x
    \]
and $u$ is an extremal of $S(\Gamma).$
\end{te}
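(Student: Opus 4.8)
The plan is to establish differentiability of $s(t)$ at $t=0^+$ by proving matching upper and lower bounds for $\limsup_{t\to 0^+}\frac{s(t)-s(0)}{t}$ and $\liminf_{t\to 0^+}\frac{s(t)-s(0)}{t}$, both equal to the right-hand side of \eqref{dev}. For the upper bound, I would use the extremal $u$ of $S(\Gamma)$ (normalized so $\|u\|_{\lq}=1$, $u\ge 0$) as a test function, transported to $\X_{\Gamma_t}$ via $v_t=u\circ\Phi_t^{-1}$. Substituting the asymptotic expansions for the volume and boundary integrals already recorded in the excerpt into the Rayleigh quotient and performing a first-order Taylor expansion of the ratio gives
\begin{equation}
s(t)\le S(\Gamma)+t\Bigl(R(u)-\tfrac{p}{q}S(\Gamma)\int_{\partial\Omega}|u|^q\dive_\tau V\,\rd\H\Bigr)+o(t),
\end{equation}
which yields the desired bound on the $\limsup$ after dividing by $t$ and letting $t\to 0^+$.

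For the lower bound, I would run the symmetric argument: let $v_t$ be a positive normalized extremal of $s(t)=S(\Gamma_t)$, and use $u_t=v_t\circ\Phi_t\in\X_\Gamma$ as a test function for $S(\Gamma)$. The same change-of-variables expansions (now applied with $\Phi_t$ in place of $\Phi_t^{-1}$, i.e. with $V$ replaced by $-V$ to leading order) give
\begin{equation}
S(\Gamma)\le \frac{\int_\Omega |\nabla u_t|^p+|u_t|^p\,\rd x}{\left(\int_{\partial\Omega}|u_t|^q\,\rd\H\right)^{p/q}}
= s(t)-t\Bigl(R(v_t)-\tfrac{p}{q}s(t)\int_{\partial\Omega}|v_t|^q\dive_\tau V\,\rd\H\Bigr)+o(t).
\end{equation}
Rearranging, $\frac{s(t)-s(0)}{t}\ge R(v_t)-\tfrac{p}{q}s(t)\int_{\partial\Omega}|v_t|^q\dive_\tau V\,\rd\H+o(1)$. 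To pass to the limit I invoke Theorem~\ref{cont} together with Remark~\ref{stcon}: the extremals $v_t$ converge strongly in $\wp$ to an extremal $u$ of $S(\Gamma)$ as $t\to 0^+$ (along any sequence; strong $W^{1,p}$ convergence is exactly what Remark~\ref{stcon} provides), and $s(t)\to S(\Gamma)$. Strong $\wp$ convergence makes $R(v_t)\to R(u)$ (the integrand of $R$ is continuous in $(\nabla v, v)$ and controlled by $|\nabla v|^p+|v|^p$), and strong $L^q(\partial\Omega)$ convergence handles the boundary term, so the $\liminf$ is bounded below by $R(u)-\tfrac{p}{q}S(\Gamma)\int_{\partial\Omega}|u|^q\dive_\tau V\,\rd\H$.

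Combining the two bounds, $\limsup_{t\to0^+}\frac{s(t)-s(0)}{t}\le R(u)-\tfrac{p}{q}S(\Gamma)\int_{\partial\Omega}|u|^q\dive_\tau V\,\rd\H\le \liminf_{t\to0^+}\frac{s(t)-s(0)}{t}$, so the one-sided derivative exists and equals the stated expression; since the flow is defined for $t$ of both signs (using $\Phi_{-t}$), the same argument run with $V\mapsto -V$ gives the left derivative with the opposite sign, hence genuine differentiability. One subtlety to flag: the formula appears not to depend on which extremal $u$ is chosen, so implicitly any extremal gives the same value of $R(u)-\tfrac{p}{q}S(\Gamma)\int|u|^q\dive_\tau V$; this is automatic from the squeeze argument (the $\limsup$ bound holds for every extremal used as test function, the $\liminf$ bound produces some extremal), but it is worth noting. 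I expect the main obstacle to be the lower bound: one must be careful that the error terms $o(t)$ in the change-of-variables expansions for $u_t=v_t\circ\Phi_t$ are uniform in $v_t$ (this needs the $W^{1,p}$ bound on $\{v_t\}$ coming from \eqref{aux1}, i.e. the continuity result, so that the remainders are $o(t)$ uniformly), and that the convergence $R(v_t)\to R(u)$ genuinely uses strong and not merely weak $\wp$ convergence — which is precisely why Remark~\ref{stcon} is recorded beforehand. The upper bound is essentially a routine Taylor expansion of a quotient.
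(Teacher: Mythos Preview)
Your proposal is correct and follows essentially the same strategy as the paper: an upper bound on the $\limsup$ via $v_t=u\circ\Phi_t^{-1}$ with $u$ a normalized extremal of $S(\Gamma)$, and a lower bound on the $\liminf$ via $u_t=v_t\circ\Phi_t$ with $v_t$ a normalized extremal of $s(t)$, the latter requiring the strong $\wp$ convergence from Remark~\ref{stcon}. You are in fact slightly more careful than the paper in flagging the uniformity of the $o(t)$ terms, the independence of the formula from the choice of extremal, and the passage from the one-sided to the two-sided derivative.
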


\begin{proof}
    Let $u$ be a positive normalized extremal of $S(\Gamma).$ Then,
    using \eqref{des1}, we have that
    \[
    s(t)\le\frac{S(\Gamma)+ t R(u) + o(t)}{\left(1+
    t\int_{\partial\Omega}|u|^q\dive_{\tau}V\,\rd\H +
    o(t)\right)^{\frac{p}{q}}}.
    \]
    Thus, for all $t>0$
    \begin{align*}
    \frac{s(t)-S(\Gamma)}{t}\le& \frac{S(\Gamma)}{t}\frac{1-\left(1+
    t\int_{\partial\Omega}|u|^q\dive_{\tau}V\,\rd\H +
    o(t)\right)^{\frac{p}{q}}}{\left(1+
    t\int_{\partial\Omega}|u|^q\dive_{\tau}V\,\rd\H +
    o(t)\right)^{\frac{p}{q}}}\\
    &+\frac{R(u) + o(1)}{\left(1+
    t\int_{\partial\Omega}|u|^q\dive_{\tau}V\,\rd\H +
    o(t)\right)^{\frac{p}{q}}},
\end{align*}
Therefore
\begin{equation}
    \limsup_{t\to 0^+}\frac{s(t)-S(\Gamma)}{t}\le
    -\frac{p}{q}S(\Gamma)\int_{\partial\Omega}|u|^q\dive_\tau V \,
    \rd\H+ R(u).
    \label{auxx1}
\end{equation}

On other hand, let $\{t_n\}_{n\in\mathbb{N}}$ be a positive sequence
such that  $t_n\to0^+$ when $n\to\infty,$ and
\[
\liminf_{t\to 0^+}\frac{s(t)-S(\Gamma)}{t}=\lim_{n\to\infty}
    \frac{s(t_n)-S(\Gamma)}{t_n}.
\]
Observe that, by Lemma \ref{cont}, we have that $s(t_n)\to
S(\Gamma).$ We can now proceed analogously to the proof of Lemma \ref{cont},
 and we find a subsequence of $\{t_n\}_{n\in\mathbb{N}}$ (still denote
 $\{t_n\}_{n\in\mathbb{N}}$) such that
\[
v_n\to u \quad\mbox{strongly in } \wp,
\]
where $v_n$ is an positive normalized
extremal of $s(t_n)$ for all $n\in\mathbb{N}$ and $u$ is an positive normalized
extremal of $S(\Gamma),$ see also Remark \ref{stcon}.

Thus, taking $u_n=v_n\circ \Phi_{t_n}\in\wpg,$ we get
\[
S(\Gamma)\le
\frac{s(t_n)-t_nR(v_n)+o(t_n)}{\left(1-t_n\int_{\partial\Omega}|v_n|^q\dive_{\tau}
V\,\rd\H+o(t_n)\right)^{\frac{p}{q}}}.
\]
Then
\begin{align*}
    \frac{s(t_n)-S(\Gamma)}{t_n}&\ge
    \frac{s(t_n)}{t_n}\frac{\left(1-
    t_n\int_{\partial\Omega}|c_n|^q\dive_{\tau}V\,\rd\H +
    o(t_n)\right)^{\frac{p}{q}}-1}{\left(1-
    t_n\int_{\partial\Omega}|v_n|^q\dive_{\tau}V\,\rd\H +
    o(t)\right)^{\frac{p}{q}}}\\
    &+\frac{R(v_n) + o(1)}{\left(1-
    t_n\int_{\partial\Omega}|v_n|^q\dive_{\tau}V\,\rd\H +
    o(t_n)\right)^{\frac{p}{q}}}.
\end{align*}
Therefore
\begin{equation}
    \begin{aligned}
    \liminf_{t\to 0^+}\frac{s(t)-S(\Gamma)}{t}&=\lim_{n\to\infty}
    \frac{s(t_n)-S(\Gamma)}{t_n}\\
    &\ge
    -\frac{p}{q}S(\Gamma)\int_{\partial\Omega}|u|^q\dive_\tau V \,
    \rd\H+ R(u).
\end{aligned}
    \label{auxx2}
\end{equation}

Thus, by \eqref{auxx1} and \eqref{auxx2}, we have that $s(t)$ is
differentiable at $t=0$ and \eqref{dev} holds.
\end{proof}

\begin{ob}
One observes that, we do not need in our approach the derivative
of the eigenfunctions.
\end{ob}

\begin{ob}
It would be desirable to obtain a simplification of Formula
\eqref{dev}. In many problems (cf. \cite{FBGR, HP, O}, etc) this
can be done by using, in an appropriate way, the equation
satisfied by $u$. In our case, the obstruction we have encountered
in order to do that, is the lack of regularity of $u$ at the
boundary. A similar problem was found in \cite{BD} where the
authors attempt to overcome this difficulty by working on a subset
$\Omega_\delta\subset \Omega$ and then passing to the limit
(however, the results are not completely satisfactory). In our
case, since we cannot control the normal derivative of $u$ in
$\Omega_\delta$, this approach does not seems to be feasible.
\end{ob}

\section*{Acknowledgements}
This work was partially supported by  project PROSUL (CNPq-CONICET) nro: 490329/2008-0.

J. Fern\'andez Bonder and Leandro Del Pezzo were also partially supported by Universidad de Buenos Aires under grant X078, by ANPCyT PICT No. 2006-290 and CONICET (Argentina) PIP 5478/1438.

Wladimir Neves was also partially supported by FAPERJ though the grant E-26 / 111.564/2008 entitled Analysis, Geometry and Applications and by Pronex-FAPERJ through the grant E-26/ 110.560/2010 entitled Nonlinear Partial Differential Equations.

\bibliographystyle{amsplain}
\bibliography{biblio}

\def\cprime{$'$} \def\cprime{$'$}
\providecommand{\bysame}{\leavevmode\hbox to3em{\hrulefill}\thinspace}
\providecommand{\MR}{\relax\ifhmode\unskip\space\fi MR }
\providecommand{\MRhref}[2]{%
  \href{http://www.ams.org/mathscinet-getitem?mr=#1}{#2}
}
\providecommand{\href}[2]{#2}
\begin{thebibliography}{10}

\bibitem{Allaire}
Gr{\'e}goire Allaire, Fr{\'e}d{\'e}ric de~Gournay, Fran{\c{c}}ois Jouve, and
  Anca-Maria Toader, \emph{Structural optimization using topological and shape
  sensitivity via a level set method}, Control Cybernet. \textbf{34} (2005),
  no.~1, 59--80. \MR{2211063 (2006j:49073)}

\bibitem{Barbarosie}
Cristian Barbarosie and Anca-Maria Toader, \emph{Shape and topology
  optimization for periodic problems. {I}. {T}he shape and the topological
  derivative}, Struct. Multidiscip. Optim. \textbf{40} (2010), no.~1-6,
  381--391. \MR{2565077}

\bibitem{BD}
Almut Burchard and Jochen Denzler, \emph{On the geometry of optimal windows,
  with special focus on the square}, SIAM J. Math. Anal. \textbf{37} (2006),
  no.~6, 1800--1827 (electronic).

\bibitem{DP}
L.~M. {Del Pezzo}, \emph{{Optimization problem for extremals of the trace
  inequality in domains with holes}}, Commun. Contemp. Math. \textbf{12}
  (2010), no.~4, 569--586.

\bibitem{D}
Jochen Denzler, \emph{Windows of given area with minimal heat diffusion},
  Trans. Amer. Math. Soc. \textbf{351} (1999), no.~2, 569--580.

\bibitem{EG}
Lawrence~C. Evans and Ronald~F. Gariepy, \emph{Measure theory and fine
  properties of functions}, Studies in Advanced Mathematics, CRC Press, Boca
  Raton, FL, 1992.

\bibitem{Feijoo}
Ra{\'u}l~A. Feij{\'o}o, Antonio~A. Novotny, Edgardo Taroco, and Claudio Padra,
  \emph{The topological derivative for the {P}oisson's problem}, Math. Models
  Methods Appl. Sci. \textbf{13} (2003), no.~12, 1825--1844. \MR{2032213
  (2004m:49105)}

\bibitem{FBGR}
Juli{\'a}n Fern{\'a}ndez~Bonder, Pablo Groisman, and Julio~D. Rossi,
  \emph{Optimization of the first {S}teklov eigenvalue in domains with holes: a
  shape derivative approach}, Ann. Mat. Pura Appl. (4) \textbf{186} (2007),
  no.~2, 341--358.

\bibitem{FBMR}
Juli{\'a}n Fern{\'a}ndez~Bonder, Sandra Mart{\'{\i}}nez, and Julio~D. Rossi,
  \emph{The behavior of the best {S}obolev trace constant and extremals in thin
  domains}, J. Differential Equations \textbf{198} (2004), no.~1, 129--148.
  \MR{2037752 (2005f:35060)}

\bibitem{FBR1}
Juli{\'a}n Fern{\'a}ndez~Bonder and Julio~D. Rossi, \emph{Existence results for
  the {$p$}-{L}aplacian with nonlinear boundary conditions}, J. Math. Anal.
  Appl. \textbf{263} (2001), no.~1, 195--223.

\bibitem{FBR2}
\bysame, \emph{Asymptotic behavior of the best {S}obolev trace constant in
  expanding and contracting domains}, Commun. Pure Appl. Anal. \textbf{1}
  (2002), no.~3, 359--378.

\bibitem{FBR4}
\bysame, \emph{On the existence of extremals for the {S}obolev trace embedding
  theorem with critical exponent}, Bull. London Math. Soc. \textbf{37} (2005),
  no.~1, 119--125. \MR{2105826 (2006a:35079)}

\bibitem{FBRS}
Juli{\'a}n Fern{\'a}ndez~Bonder, Julio~D. Rossi, and Carola-Bibiane
  Sch{\"o}nlieb, \emph{An optimization problem related to the best {S}obolev
  trace constant in thin domains}, Commun. Contemp. Math. \textbf{10} (2008),
  no.~5, 633--650. \MR{2446893 (2010c:46080)}

\bibitem{FBRW1}
Juli{\'a}n Fern{\'a}ndez~Bonder, Julio~D. Rossi, and Noemi Wolanski,
  \emph{Regularity of the free boundary in an optimization problem related to
  the best {S}obolev trace constant}, SIAM J. Control Optim. \textbf{44}
  (2005), no.~5, 1614--1635 (electronic).

\bibitem{FBRW2}
\bysame, \emph{On the best {S}obolev trace constant and extremals in domains
  with holes}, Bull. Sci. Math. \textbf{130} (2006), no.~7, 565--579.

\bibitem{FBS}
Juli{\'a}n Fern{\'a}ndez~Bonder and Nicolas Saintier, \emph{Estimates for the
  {S}obolev trace constant with critical exponent and applications}, Ann. Mat.
  Pura Appl. (4) \textbf{187} (2008), no.~4, 683--704. \MR{2413374
  (2009h:35075)}

\bibitem{HP}
Antoine Henrot and Michel Pierre, \emph{Variation et optimisation de formes},
  Math\'ematiques \& Applications [Mathematics \& Applications], vol. 48,
  Springer, Berlin, 2005, Une analyse g{\'e}om{\'e}trique. [A geometric
  analysis].

\bibitem{Iguernane}
Mohamed Iguernane, Serguei~A. Nazarov, Jean-Rodolphe Roche, Jan Sokolowski, and
  Katarzyna Szulc, \emph{Topological derivatives for semilinear elliptic
  equations}, Int. J. Appl. Math. Comput. Sci. \textbf{19} (2009), no.~2,
  191--205. \MR{2541192 (2010g:49073)}

\bibitem{k}
Bernhard Kawohl, \emph{Rearrangements and convexity of level sets in {PDE}},
  Lecture Notes in Mathematics, vol. 1150, Springer-Verlag, Berlin, 1985.

\bibitem{L}
Gary~M. Lieberman, \emph{Boundary regularity for solutions of degenerate
  elliptic equations}, Nonlinear Anal. \textbf{12} (1988), no.~11, 1203--1219.

\bibitem{Li2}
\bysame, \emph{The natural generalization of the natural conditions of
  {L}adyzhenskaya and {U}ral\cprime tseva for elliptic equations}, Comm.
  Partial Differential Equations \textbf{16} (1991), no.~2-3, 311--361.
  \MR{1104103 (92c:35041)}

\bibitem{O}
{\'E}douard Oudet, \emph{Numerical minimization of eigenmodes of a membrane
  with respect to the domain}, ESAIM Control Optim. Calc. Var. \textbf{10}
  (2004), no.~3, 315--330 (electronic).

\bibitem{Sp}
Emanuel Sperner, Jr., \emph{Spherical symmetrization and eigenvalue estimates},
  Math. Z. \textbf{176} (1981), no.~1, 75--86.

\bibitem{V}
J.~L. V{\'a}zquez, \emph{A strong maximum principle for some quasilinear
  elliptic equations}, Appl. Math. Optim. \textbf{12} (1984), no.~3, 191--202.

\end{thebibliography}

\end{document}